\documentclass{amsart}
\usepackage{amsmath,amsthm,amsfonts, amssymb,amscd}
\usepackage[all]{xy}

\newtheorem{theorem}{Theorem}[section]
\newtheorem{lemma}[theorem]{Lemma}
\newtheorem{example}[theorem]{Example}
\newtheorem{remark}[theorem]{Remark}
\newtheorem{proposition}[theorem]{Proposition}
\newtheorem{problem}[theorem]{Problem}
\newcommand{\minusre}{\hspace{0.3em}\raisebox{0.3ex}{\sl \tiny /}\hspace{0.3em}}
\newcommand{\minusli}{\hspace{0.3em}\raisebox{0.3ex}{\sl \tiny $\setminus $}\hspace{0.3em}}
\newcommand{\lex}{\,\overrightarrow{\times}\,}

\newcommand{\RDP}{\mbox{\rm RDP}}
\newcommand{\RIP}{\mbox{\rm RIP}}
\begin{document}
\title[Some Remarks on Kite Pseudo Effect Algebras]{Some Remarks on Kite Pseudo Effect Algebras}
\author[A. Dvure\v{c}enskij, W.Ch. Holland]{Anatolij Dvure\v censkij$^{1,2}$, W. Charles Holland$^3$}

\date{}%
\maketitle
\begin{center}  \footnote{Keywords: Pseudo MV-algebra, pseudo effect algebra $\ell$-group, po-group, strong unit, kite pseudo effect algebra, subdirect product, po-loop.

 AMS classification: 03G12 81P15,  03B50

The paper has been supported by  Slovak Research and Development Agency under the contract APVV-0178-11, the grant VEGA No. 2/0059/12 SAV, and by
CZ.1.07/2.3.00/20.0051.
 }
Mathematical Institute,  Slovak Academy of Sciences\\
\v Stef\'anikova 49, SK-814 73 Bratislava, Slovakia\\
$^2$ Depart. Algebra  Geom.,  Palack\'{y} University\\
17. listopadu 12, CZ-771 46 Olomouc, Czech Republic\\

$^3$ Department of Mathematics, University of Colorado\\
8323 Thunderhead Drive, Boulder, Colorado 80302, USA\\

E-mail: {\tt dvurecen@mat.savba.sk} \quad {\tt Charles.Holland@Colorado.EDU}
\end{center}

\begin{abstract}
Recently a new family of pseudo effect algebras, called kite pseudo effect algebras, was introduced. Such an algebra starts with a po-group $G$, a set $I$ and with two bijections $\lambda,\rho:I \to I.$ Using a clever construction on the ordinal sum of $(G^+)^I$ and $(G^-)^I,$ we can define a pseudo effect algebra which can be non-commutative even if $G$ is an Abelian po-group. In the paper we give a characterization of subdirect product of subdirectly irreducible kite pseudo effect algebras, and we show that every kite pseudo effect algebra is an interval in a unital po-loop.
\end{abstract}

\section{Introduction}

{\it Effect algebras} are partial algebras introduced in \cite{FoBe} in order to model quantum mechanical measurements. The primary notion is addition $+$ such that $a + b$ describes the disjunction of two mutually excluding events
$a$ and $b.$ A basic model of effect algebras is the system $\mathcal E(H)$ of Hermitian operators of a Hilbert space $H$ which are between the zero and the identity operator. Effect algebras generalize many quantum structures like Boolean algebras, orthomodular lattices and posets, orthoalgebras, and MV-algebras. Effect algebras combine in an algebraic way both sharp and fuzzy features of a measurement process in quantum mechanics.

Many important effect algebras are intervals $[0,u]$ in Abelian partially ordered groups (po-groups for short) with strong unit $u;$ we call them interval effect algebras. This is true, e.g. for $\mathcal E(H)$ when it is the interval $[O,I]$ in the po-group $\mathcal B(H),$ the system of all Hermitian operators of $H,$ where $O$ and $I$ are the zero and the identity operator.

Another class of interval effect algebras is all of
those with the Riesz Decomposition Property (RDP for short), see \cite{Rav}. This property enables a joint refinement of two decompositions of the unit element. This class contains all MV-algebras.

For effect algebras, the partial operation $+$ was assumed to be commutative. This assumption was canceled in \cite{DvVe1, DvVe2}, where a non-commutative version of effect algebras, called {\it pseudo effect algebras}, is introduced. Using a stronger version of RDP, also some pseudo effect algebras are intervals in po-groups which are not necessarily Abelian. A physical motivation for introducing pseudo effect algebras with possible physical situations in quantum mechanics was presented in \cite{DvVe4}. We note the family of pseudo effect algebras contains also the family of pseudo MV-algebras, or equivalently, the family of generalized MV-algebras, \cite{GeIo, Rac}.

Recently, in \cite{JiMo} there was presented an interesting construction of a pseudo BL-algebra whose underlying set is an ordinal sum of $\mathbb Z^+$ and $\mathbb Z^-\times \mathbb Z^-,$ where $\mathbb Z$ denotes the group of integers; its shape resembles a kite. This construction was generalized in \cite{DvKo} for an arbitrary $\ell$-group $G,$ and kite BL-algebras were introduced.

In the paper \cite{DvuK}, instead of $\ell$-groups, we have used po-groups and {\it kite pseudo effect algebras}, as a special family of pseudo effect algebras, were defined. This family enriches a reservoir of interesting examples of pseudo effect algebras. In addition, it shows importance of po-groups and $\ell$-groups for the theory of pseudo effect algebras. Basic properties of kite pseudo effect algebras were presented in \cite{DvuK}, in particular, a characterization of subdirectly irreducible kite pseudo effect algebras was given.

In this paper, we continue with the study of kite pseudo effect algebras studying conditions when a kite pseudo effect algebra is a subdirect product of subdirectly irreducible kite pseudo effect algebras. In addition, we show that every kite pseudo effect algebra is always an interval in a partially ordered loop.

The paper is organized as follows. Section 2 gathers basic facts of pseudo effect algebras, po-groups, and different forms of RDP's. Section 3 redefines kite pseudo effect algebras and characterize subdirect product of subdirectly irreducible kite pseudo effect algebras. Finally, Section 4 shows that every kite pseudo effect algebra is an interval in a unital po-loop. In addition, some illustrating examples are given and some open problems are formulated.

\section{Elements of Pseudo Effect Algebras}

According to \cite{DvVe1, DvVe2}, we say that a {\it pseudo effect algebra} is  a partial algebra  $ E=(E; +, 0, 1)$, where $+$ is a partial binary operation and $0$ and $1$ are constants, such that for all $a, b, c
\in E$, the following holds

\begin{enumerate}
\item[(i)] $a+b$ and $(a+b)+c$ exist if and only if $b+c$ and
$a+(b+c)$ exist, and in this case $(a+b)+c = a+(b+c)$;

\item[(ii)]
  there is exactly one $d \in E$ and
exactly one $e \in E$ such that $a+d = e+a = 1$;

\item[(iii)]
 if $a+b$ exists, there are elements $d, e
\in E$ such that $a+b = d+a = b+e$;

\item[(iv)] if $1+a$ or $a+1$ exists, then $a = 0$.
\end{enumerate}

If we define $a \le b$ if and only if there exists an element $c\in
E$ such that $a+c =b,$ then $\le$ is a partial ordering on $E$ such
that $0 \le a \le 1$ for any $a \in E.$ It is possible to show that
$a \le b$ if and only if $b = a+c = d+a$ for some $c,d \in E$. We
write $c = a \minusre b$ and $d = b \minusli a.$ Then

$$ (b \minusli a) + a = a + (a \minusre b) = b,
$$
and we write $a^- = 1 \minusli a$ and $a^\sim = a\minusre 1$ for any
$a \in E.$ Then $a^-+a=1=a+a^\sim$ and $a^{-\sim}=a=a^{\sim-}$ for any $a\in E.$

We note that an {\it ideal} of a pseudo effect algebra $E$ is any non-empty subset $I$ of $E$ such that (i) if $x,y \in I$ and $x+y$ is defined in $E,$ then $x+y \in I,$ and (ii) $x\le y \in I$ implies $x\in I.$ An ideal $I$ is {\it normal} if $x+I=I+x$ for any $x \in E,$ where $x+I:=\{x+y: y \in I,\ x+y $ exists in $E\}$ and in the dual way  we define $I+x.$

For basic properties of pseudo effect algebras, we recommend  \cite{DvVe1, DvVe2}, where unexplained notions and results can be found.
We note that a pseudo effect algebra is an {\it effect algebra}  iff $+$ is commutative.

We note that a {\it po-group} (= partially ordered group) is a
group $G=(G;+,-,0)$ endowed with a partial order $\le$ such that if $a\le b,$ $a,b
\in G,$ then $x+a+y \le x+b+y$ for all $x,y \in G.$ (We note that we will use both additive and multiplicative form of po-groups.) We denote by
$G^+:=\{g \in G: g \ge 0\}$ and $G^-:=\{g \in G: g \le 0\}$ the {\it positive cone} and the {\it negative cone} of $G.$ If, in addition, $G$
is a lattice under $\le$, we call it an $\ell$-group (= lattice
ordered group). An element $u \in G^+$ is said to be a {\it strong unit} (or an {\it order unit}) if, given $g \in G,$ there is an integer $n \ge 1$ such that $g \le nu.$ The pair $(G,u),$ where $u$ is a fixed strong unit of $G,$ is said to be a {\it unital po-group}. We recall that  the {\it lexicographic product} of two po-groups $G_1$ and $G_2$ is the group $G_1\times G_2,$ where the group operations are defined by coordinates, and the ordering $\le $ on $G_1 \times G_2$ is defined as follows: For $(g_1,h_1),(g_2,h_2) \in G_1 \times G_2,$  we have $(g_1,h_1)\le (g_2,h_2)$ whenever $g_1 <g_2$ or $g_1=g_2$ and $h_1\le h_2.$

A po-group $G$ is said to be {\it directed} if, given $g_1,g_2 \in G,$ there is an element $g \in G$ such that $g \ge g_1,g_2.$  We note  every $\ell$-group or every  po-group with strong unit is directed.
For more information on po-groups and $\ell$-groups we recommend the books \cite{Fuc, Gla}.

Now let  $G$ be a po-group and fix $u \in G^+.$ If we set $\Gamma(G,u):=[0,u]=\{g \in G: 0 \le g \le u\},$ then $\Gamma(G,u)=(\Gamma(G,u); +,0,u)$ is a pseudo effect algebra, where $+$ is the restriction of the group addition $+$ to $[0,u],$ i.e. $a+b$ is defined in $\Gamma(G,u)$ for $a,b \in \Gamma(G,u)$ iff $a+b \in \Gamma(G,u).$ Then $a^-=u-a$ and $a^\sim=-a+u$ for any $a \in \Gamma(G,u).$ A pseudo effect algebra which is isomorphic to some $\Gamma(G,u)$ for some po-group $G$ with $u>0$ is said to be an {\it interval pseudo effect algebra}.

We recall that by an {\it o-ideal} of a directed po-group $G$ we
mean any normal directed convex subgroup $H$ of $G;$ convexity means if $g,h\in H$ and $v\in G$ such that $g\le v\le h,$ then $v \in H.$ If $G$ is a po-group and $H$ an o-ideal of $G$, then $G/H,$ where $x/H \le y/H$ iff $x\le h_1+y$ for some $h_1\in H$ iff $x \le y+h_2$ for some $h_2 \in H,$ is also a po-group.

The following kinds of the Riesz Decomposition properties were introduced in \cite{DvVe1,DvVe2} and are crucial for the study of pseudo effect algebras.

We say that a  po-group $G$ satisfies

\begin{enumerate}
\item[(i)]
the {\it Riesz Interpolation Property} (RIP for short) if, for $a_1,a_2, b_1,b_2\in G,$  $a_1,a_2 \le b_1,b_2$  implies there exists an element $c\in G$ such that $a_1,a_2 \le c \le b_1,b_2;$

\item[(ii)]
\RDP$_0$  if, for $a,b,c \in G^+,$ $a \le b+c$, there exist $b_1,c_1 \in G^+,$ such that $b_1\le b,$ $c_1 \le c$ and $a = b_1 +c_1;$

\item[(iii)]
\RDP\  if, for all $a_1,a_2,b_1,b_2 \in G^+$ such that $a_1 + a_2 = b_1+b_2,$ there are four elements $c_{11},c_{12},c_{21},c_{22}\in G^+$ such that $a_1 = c_{11}+c_{12},$ $a_2= c_{21}+c_{22},$ $b_1= c_{11} + c_{21}$ and $b_2= c_{12}+c_{22};$

\item[(iv)]
\RDP$_1$  if, for all $a_1,a_2,b_1,b_2 \in G^+$ such that $a_1 + a_2 = b_1+b_2,$ there are four elements $c_{11},c_{12},c_{21},c_{22}\in G^+$ such that $a_1 = c_{11}+c_{12},$ $a_2= c_{21}+c_{22},$ $b_1= c_{11} + c_{21}$ and $b_2= c_{12}+c_{22}$, and $0\le x\le c_{12}$ and $0\le y \le c_{21}$ imply  $x+y=y+x;$

\item[(v)]
\RDP$_2$  if, for all $a_1,a_2,b_1,b_2 \in G^+$ such that $a_1 + a_2 = b_1+b_2,$ there are four elements $c_{11},c_{12},c_{21},c_{22}\in G^+$ such that $a_1 = c_{11}+c_{12},$ $a_2= c_{21}+c_{22},$ $b_1= c_{11} + c_{21}$ and $b_2= c_{12}+c_{22}$, and $c_{12}\wedge c_{21}=0.$

\end{enumerate}

If, for $a,b \in G^+,$ we have for all $0\le x \le a$ and $0\le y\le b,$ $x+y=y+x,$ we denote this property by $a\, \mbox{\rm \bf com}\, b.$

For Abelian po-groups, RDP, RDP$_1,$ RDP$_0$ and RIP are equivalent.

By \cite[Prop 4.2]{DvVe1} for directed po-groups, we have
$$
\RDP_2 \quad \Rightarrow \RDP_1 \quad \Rightarrow \RDP \quad \Rightarrow \RDP_0 \quad \Leftrightarrow \quad  \RIP,
$$
but the converse implications do not hold, in general.  A directed po-group $G$ satisfies \RDP$_2$ iff $G$ is an $\ell$-group, \cite[Prop 4.2(ii)]{DvVe1}.

Finally, we say that a pseudo effect algebra $E$ satisfies the above types of the Riesz decomposition properties, if in the definition of RDP's, we change $G^+$ to $E.$

A fundamental result of pseudo effect algebras which binds them with po-groups says the following, \cite[Thm 7.2]{DvVe2}:

\begin{theorem}\label{th:2.2}
For every pseudo effect algebra $E$ with \RDP$_1,$ there is a unique (up to isomorphism of unital po-groups) unital po-group $(G,u)$ with \RDP$_1$\ such that $E \cong \Gamma(G,u).$
\end{theorem}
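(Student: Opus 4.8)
The plan is to reconstruct the unital po-group $(G,u)$ directly from $E$ by a Grothendieck-style ``group of differences'' construction, exploiting the \RDP$_1$ hypothesis at exactly the point where non-commutativity threatens the passage from a monoid to a group. First I would build the positive cone. Consider finite words $(a_1,\dots,a_n)$ with entries in $E$, thought of as formal sums $a_1+\cdots+a_n$ whose partial sums need not exist in $E$. On these words impose the smallest equivalence relation $\sim$ generated by the local moves that replace an entry $a_i$ by a pair $(b,c)$ whenever $a_i=b+c$ holds in $E$, together with the inverse move of combining a composable adjacent pair. I expect \RDP$_1$ to guarantee that any two words representing the ``same'' element admit a common refinement, so that $\sim$ is a congruence for concatenation; set $G^+$ to be the quotient monoid $(G^+,\oplus)$, with unit the class of the empty word.

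Next I would verify that $(G^+,\oplus)$ is cancellative, directed, and satisfies \RDP. Cancellation is the heart of the matter: when two words are concatenated, \RDP$_1$ supplies refinements of an identity $a_1+a_2=b_1+b_2$ in which the cross-terms $c_{12},c_{21}$ commute with everything below them, which is precisely the commutativity needed to slide entries past one another and cancel. With cancellation in hand, form $G$ as the universal group of the monoid $G^+$; because the extra commutativity clause of \RDP$_1$ forces the relevant left and right Ore conditions to coincide, the left- and right-fraction constructions agree and $G^+$ embeds into $G$ as a genuine positive cone. Order $G$ by $x\le y$ iff $y-x\in G^+$ and check translation invariance on both sides, so that $G$ becomes a po-group.

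Then I would identify the unit and the isomorphism. Let $u$ be the class of the one-letter word $(1)$, where $1$ is the top of $E$; since every element of $E$ lies below $1$ and every word is a finite concatenation, $u$ is a strong unit. The map $\eta\colon a\mapsto [(a)]$ sends $E$ into $[0,u]$, and I would show $\eta$ is a bijection onto $\Gamma(G,u)$ carrying the partial addition of $E$ to the restricted group addition, using that a word lies in $[0,u]$ precisely when it is $\sim$-equivalent to a single letter of $E$. Transferring \RDP$_1$ from $E$ to $G^+$, hence to the directed group $G$, completes the existence half.

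Finally, uniqueness I would obtain from a universal property. Any unital po-group $(H,v)$ with \RDP$_1$ and $E\cong\Gamma(H,v)$ yields a monoid homomorphism $G^+\to H^+$ extending the inclusion of $E$, which lifts to a po-group homomorphism $G\to H$ sending $u$ to $v$; since $H^+$ is generated under $+$ by $\Gamma(H,v)$, this map is onto, and injectivity follows from cancellation, giving an isomorphism of unital po-groups. The principal obstacle throughout is the second step, namely proving that $\sim$ is a concatenation congruence and that $G^+$ is cancellative, because it is exactly here that plain \RDP\ fails in the non-commutative setting and the commutativity of the cross-terms granted by \RDP$_1$ must be used in full.
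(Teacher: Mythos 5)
The first thing to note is that the paper contains no proof of this statement at all: it is imported verbatim as \cite[Thm 7.2]{DvVe2}, so the only meaningful comparison is with the proof in that reference. Your plan is, in outline, the same strategy used there: one builds a semigroup of finite words over $E$, uses \RDP$_1$ to prove a common-refinement lemma that makes the refinement relation a concatenation congruence and the quotient monoid cancellative, embeds this monoid as the positive cone of a group, takes $u$ to be the class of the one-letter word $(1)$, identifies $E$ with the interval $[0,u]$, and gets uniqueness from the fact that the positive cone of any representing unital po-group is generated by its unit interval. So you have not found a different route; you have reconstructed the standard one, and your identification of the refinement/cancellation step as the place where the commutativity clause of \RDP$_1$ does its work is accurate.

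There is, however, one step where your sketch as written would fail: the passage from the cancellative monoid $G^+$ to a group. In the non-commutative setting, cancellativity alone does \emph{not} guarantee that a monoid embeds into a group (Mal'cev's classical counterexample), and your stated reason --- that the commutativity clause of \RDP$_1$ ``forces the relevant left and right Ore conditions to coincide'' --- is not a mechanism; the two Ore conditions are separate hypotheses and each must be \emph{verified}, not reconciled. What actually makes the fraction construction available is reversibility: any two word classes admit common left (and right) multiples, and this comes from the unit structure of $E$ --- using the complements $a^-$ and $a^\sim$ one shows every word can be multiplied by another word so as to become equivalent to a power of the unit word $(1)$ --- together with the refinement lemma; it is not a consequence of the commuting cross-terms as such. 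Two further points you gloss over are genuine lemmas rather than routine checks: in a non-commutative group ``$y-x\in G^+$'' is ambiguous, and two-sided translation invariance of the order requires the cone $G^+$ to be normal (invariant under conjugation); and the claim that a word class lies in $[0,u]$ exactly when it is equivalent to a single letter is precisely the statement $E\cong\Gamma(G,u)$ and needs its own refinement argument. None of these invalidate the overall plan, but the Ore step, as you justify it, is a gap rather than an omitted verification.
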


In addition, the functor $\Gamma$ defines a categorical equivalence between the category of pseudo effect algebras with \RDP$_1$ and the category of unital po-groups with \RDP$_1.$

In particular, a pseudo effect algebra $E$ is a pseudo MV-algebra (for definition see \cite{GeIo}) iff $E$ satisfies RDP$_2,$ and in such a case, every pseudo MV-algebra is an interval in a unital $\ell$-group;
for details see \cite{Dvu2} and \cite{DvVe2}.

\section{Subdirect Product of Subdirectly Irreducible Kite Pseudo Effect Algebras}

The present section defines kite pseudo effect algebras and gives a characterization of subdirect product of subdirectly irreducible  kite pseudo effect algebras.

According to \cite{DvuK}, we present kite pseudo effect algebras.  Let $G=(G;\cdot,^{-1},e)$ be a multiplicatively-written  po-group with an inverse $^{-1},$ identity element $e,$ and equipped with a partial order $\le.$ Then $G^+:=\{g \in G\colon g\ge e\}$ and $G^-:=\{g\in G \colon g \le e\}.$

Let $I$ be a set. Define an algebra whose
universe is the set $(G^+)^I \uplus (G^-)^I,$ where $\uplus$ denotes a union of disjoint sets.
We order its universe by keeping the original co-ordinatewise ordering
within $(G^+)^I$ and $(G^-)^I$, and setting $x\leq y$ for all
$x\in(G^+)^I$, $y\in(G^-)^I$. Then $\le$ is a partial order on $(G^+)^I \uplus (G^-)^I.$ Hence, the element $e^I:=\langle e: i \in I\rangle$ appears twice: at the bottom of $(G^+)^I$ and at the top of $(G^-)^I$. To
avoid confusion in the definitions below, we adopt a convention of writing
$a_i^{-1},b_i^{-1}, \dots$ for co-ordinates of elements of $(G^-)^I$ and
$f_j,g_j,\dots$ for co-ordinates of elements of $(G^+)^I$. In particular, we
will write $e^{-1}$ for $e$ as an element of $G^-,$  $e$ as an element of $G^+,$ and without loss of generality, we will assume that formally $e^{-1}\ne e.$ We also put $1$ for the
constant sequence $(e^{-1})^I:=\langle e^{-1}\colon i \in
I\rangle$ and $0$ for the constant sequence $e^I:=\langle e_j\colon j \in I\rangle$. Then $0$ and $1$ are the least and greatest elements of $(G^+)^I \uplus (G^-)^I.$

The following construction of kite pseudo effect algebras was presented in \cite[Thm 3.4]{DvuK}:

\begin{theorem}\label{th:3.4}
Let $G$ be a po-group and $\lambda,\rho:I\to I$ be bijections. Let us endow the set $(G^+)^I \uplus (G^-)^I$ with $0=e^I,$ $1=(e^{-1})^I$ and with a partial operation $+$ as follows,

$$\langle a_i^{-1}\colon i\in I\rangle + \langle b_i^{-1}\colon i\in I\rangle \eqno(I)$$
is not defined;

$$ \langle a_i^{-1}\colon i\in I\rangle + \langle f_j\colon j\in I\rangle:= \langle a_i^{-1}f_{\rho^{-1}(i)}\colon i\in I\rangle \eqno(II)
$$
whenever  $f_{\rho^{-1}(i)}\le a_i,$ $i \in I;$

$$ \langle f_j\colon j\in I\rangle+ \langle a_i^{-1}\colon i\in I\rangle  := \langle f_{\lambda^{-1}(i)} a_i^{-1}\colon i\in I\rangle \eqno(III)
$$
whenever  $f_{\lambda^{-1}(i)}\le a_i,$ $i \in I,$

$$
\langle f_j\colon j\in I\rangle + \langle g_j\colon j\in I\rangle:= \langle f_j g_j\colon j\in I\rangle
\eqno(IV)
$$
for all $\langle f_j\colon j\in I\rangle$ and $\langle g_j\colon j\in I\rangle.$
Then the partial algebra $K^{\lambda,\rho}_I(G)_{ea}:=((G^+)^I \uplus (G^-)^I; +,0,1)$ becomes a pseudo effect algebra.

If $G$ is an $\ell$-group, then $K^{\lambda,\rho}_I(G)_{ea}$ is a pseudo effect algebra with \RDP$_2.$
\end{theorem}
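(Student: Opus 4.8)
The plan is to prove the two assertions in turn: first that $K^{\lambda,\rho}_I(G)_{ea}$ satisfies the four defining axioms of a pseudo effect algebra, and then, under the stronger hypothesis that $G$ is an $\ell$-group, that it satisfies \RDP$_2$. Throughout I would exploit the basic sign rule that a sum lands in the lower cone $(G^+)^I$ exactly when both summands are positive, lands in the upper cone $(G^-)^I$ exactly when precisely one summand is negative, and is undefined when both are negative. For the first assertion I would run a systematic case analysis on the cones of the elements involved. Axiom (iv) is immediate, since the guarding inequalities $f_{\rho^{-1}(i)}\le e$ and $f_{\lambda^{-1}(i)}\le e$ in $G^+$ force the positive summand to be $\langle e\rangle=0$. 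Axiom (ii) is read off directly: for $a=\langle f_j\rangle$ the unique solution of $a+d=1$ is $d=\langle f_{\lambda^{-1}(i)}^{-1}\rangle$ and the unique solution of $e+a=1$ is $\langle f_{\rho^{-1}(i)}^{-1}\rangle$, the cone inequalities of (II), (III) being automatically satisfied as equalities; the negative case is dual. The real content lies in axioms (i) and (iii): because $\lambda$ and $\rho$ are bijections, each coordinate of a triple sum is a product of three factors of $G$ with indices matched up by $\lambda^{-1}$ and $\rho^{-1}$, so associativity and the two-sided law reduce coordinatewise to the group law of $G$, once one checks that the domains of definition agree on both sides. This is routine but must be verified for each configuration of cones.

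For \RDP$_2$ I would begin from an identity $a_1+a_2=b_1+b_2=:s$ and split on the cone of $s$. If $s\in(G^+)^I$, then $a_1,a_2,b_1,b_2$ are all positive and the entire identity takes place in the positive cone of the $\ell$-group $G^I$; here I simply invoke the cited fact that a directed po-group satisfies \RDP$_2$ iff it is an $\ell$-group to obtain a coordinatewise refinement $c_{11},c_{12},c_{21},c_{22}\in(G^+)^I$ with $c_{12}\wedge c_{21}=0$. All four elements are again positive, hence legitimate elements of the kite, and the meet coincides with the coordinatewise meet in $G$, so the condition transfers verbatim.

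The substantive case is $s\in(G^-)^I$, where exactly one of $a_1,a_2$ and exactly one of $b_1,b_2$ is negative, giving four subcases; in each the sign pattern forces the cones of the refining elements, exactly one $c_{ij}$ being negative. When the two negative summands occupy the same column, the off-diagonal elements $c_{12},c_{21}$ are positive, and I would take the negative pivot shared by the two negatives to be $\langle(a_i\vee b_i)^{-1}\rangle$, built from the coordinatewise join in $G$. Since left translation distributes over $\wedge$ in an $\ell$-group, the two positive off-diagonal elements come out as $\langle(a_i\vee b_i)a_i^{-1}\rangle$ and $\langle(a_i\vee b_i)b_i^{-1}\rangle$, whose meet is $\langle(a_i\vee b_i)(a_i^{-1}\wedge b_i^{-1})\rangle=\langle(a_i\vee b_i)(a_i\vee b_i)^{-1}\rangle=0$, exactly the \RDP$_2$ condition; the last refining equation then reduces coordinatewise to the $\ell$-group identity $(a_i\vee b_i)a_i^{-1}b_i=b_i\vee b_ia_i^{-1}b_i=b_ia_i^{-1}(a_i\vee b_i)$, so it holds automatically. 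In the remaining two subcases the negatives sit in different columns, one of $c_{12},c_{21}$ is itself negative, and the meet condition forces the other (positive) one to be $0$; this is consistent because the group cancellation available in the underlying construction turns the hypothesis $a_1+a_2=b_1+b_2$ into the required identity, the negative element being recovered as the difference guaranteed by the order $b_1\le a_1$.

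The main obstacle I anticipate is the bookkeeping of this mixed case: one must simultaneously keep the four refining elements in the correct cones, verify every cone inequality guarding (II) and (III), and confirm $c_{12}\wedge c_{21}=0$, all while respecting the two permutations $\lambda$ and $\rho$. The delicate point is the positivity of the last forced element, which is not implied by the guarding inequality of its own summand alone but follows from the fact that the corresponding coordinate of $b_2$ already lies in $G^+$. Choosing the pivot as the coordinatewise join (or, in the mirror subcases, meet) in $G$ is precisely what reconciles the meet condition, the consistency of the refinement, and these positivity constraints at once, and checking that this single choice works in each of the four subcases is the crux of the argument.
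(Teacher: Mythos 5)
Your proposal cannot be compared against an internal proof, because this paper does not prove Theorem \ref{th:3.4} at all: the theorem is imported verbatim from \cite[Thm 3.4]{DvuK}, so your argument has to be judged on its own merits. On those merits it is correct, and the places where it carries real content check out. For the pseudo effect algebra axioms, the cone-by-cone analysis does reduce to guarded group identities with matching domains; for instance, in the configuration $(+,-,+)$ both existence conditions for axiom (i) collapse to $g_{\rho^{-1}(i)}f_{\lambda^{-1}(i)}\le b_i$, and axiom (iii) needs conjugates such as $d_i=a_i^{-1}f_{\rho^{-1}(i)}a_i$, whose guards follow from the guard of $a+b$. For \RDP$_2$, writing $a_1=\langle\alpha_i^{-1}\rangle$, $b_1=\langle\beta_i^{-1}\rangle$ in the same-column case, your pivot $c_{11}=\langle(\alpha_i\vee\beta_i)^{-1}\rangle$ works: the off-diagonal entries are $\langle(\alpha\vee\beta)\alpha^{-1}\rangle$ and $\langle(\alpha\vee\beta)\beta^{-1}\rangle$ up to reindexing by $\rho$, their meet is $\langle(\alpha\vee\beta)(\alpha^{-1}\wedge\beta^{-1})\rangle=0$ since left translation distributes over $\wedge$, consistency of the fourth refinement equation is exactly your identity $(\alpha\vee\beta)\alpha^{-1}\beta=\beta\vee\beta\alpha^{-1}\beta=\beta\alpha^{-1}(\alpha\vee\beta)$, and positivity of the forced entry $c_{22}=\langle\beta(\alpha\vee\beta)^{-1}f\rangle$ is equivalent to $f\ge(\alpha\beta^{-1})\vee e$, which follows from $g=\beta\alpha^{-1}f\ge e$, i.e.\ from positivity of the coordinates of $b_2$, precisely as you say. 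In the cross-column cases, since every element of $(G^+)^I$ lies below every element of $(G^-)^I$, the meet of a positive with a negative kite element is the positive one, so $c_{12}\wedge c_{21}=0$ indeed forces the positive off-diagonal entry to be $0$, whence $c_{11}=b_1$, $c_{22}=a_2$, and the remaining negative entry is read off from the hypothesis $a_1+a_2=b_1+b_2$ with all guards satisfied. Two points you should make explicit in a final write-up: first, in the all-positive case the infimum of two elements of $(G^+)^I$ computed in the kite order coincides with the coordinatewise infimum in $G^I$ (lower bounds of positive elements are necessarily positive), so \RDP$_2$ of the $\ell$-group $G^I$ genuinely transfers; second, your parenthetical ``or, in the mirror subcases, meet'' is loose --- in both same-column subcases the pivot is the coordinatewise meet of the two negative elements themselves, equivalently the join of the $\alpha_i,\beta_i$, and stated that way the choice is uniform.
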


According to \cite{DvuK}, the pseudo effect algebra $(K^{\lambda,\rho}_I(G)_{ea};+,0,1)$ is said to be the {\it kite pseudo effect algebra} of a po-group $G.$

Let $\mathcal G$ be the category of po-groups, where objects are
po-groups and  morphisms are homomorphisms of po-groups. Similarly, let $\mathcal{PEA}$ be the category of pseudo effect algebras, where  objects are pseudo effect algebras and morphisms are homomorphisms of pseudo effect algebras.

Let us fix a set $I,$ bijective mappings $\rho,\lambda\colon I \to I,$ and define the mapping $K_{I}^{\lambda,\rho}\colon \mathcal G  \to \mathcal{PEA}$
as follows:
$$
K_{I}^{\lambda,\rho}: G \mapsto
K_{I}^{\lambda,\rho}(G)
$$
and if $h$ is a homomorphism from a po-group $G_1$ into a
po-group $G_2,$ then
$$
K_{I}^{\lambda,\rho}(h)(x) = \begin{cases}
\langle h(a_i^{-1})\colon i \in I\rangle &
    \text{ if } x = \langle a_i^{-1}\colon i \in I\rangle,\\
\langle h(f_j)\colon j \in I\rangle & \text{ if } x = \langle f_j\colon j \in I\rangle.
\end{cases}
$$
Then $K_{I}^{\lambda,\rho}$ is a functor.

We say that a pseudo effect algebra $E$ (or a po-group) is a {\it subdirect product} of a family $(E_t\colon t \in T)$ of pseudo effect algebras (po-groups), and we write $E \leq  \prod_{t \in T}E_t$ if there is an injective homomorphism $h\colon E \to \prod_{t \in T}E_t$ such that $\pi_t\circ h(E)=E_t$ for all $t \in T,$ where $\pi_t$ is the $t$-th projection from $\prod_{t \in T}E_t$ onto $E_t.$ In addition,  $E$ is
{\it subdirectly irreducible} if whenever $E$ is a subdirect product of $(E_t: t \in T),$ there exists $t_0 \in T$ such that $\pi_{t_0}\circ h$ is an isomorphism of pseudo effect algebras.

For pseudo effect algebras, the relation between congruences and ideals is more complicated. Fortunately, this is true for pseudo effect algebras with RDP$_1$, \cite{DvVe3}, or for Riesz ideals of general pseudo effect algebras. Therefore, for pseudo MV-algebras we have: a non-trivial pseudo MV-algebra $E$(i.e. $0\not=1$) is subdirectly irreducible iff the intersection of all non-trivial normal ideals of $E$ is non-trivial, or equivalently, $E$ has the least non-trivial normal ideal. An analogous result is true also for pseudo effect algebras with RDP$_1$:

\begin{lemma}\label{le:7.1}
Every pseudo effect algebra with \RDP$_1$ is a subdirect product of subdirectly irreducible pseudo effect algebras with \RDP$_1.$
\end{lemma}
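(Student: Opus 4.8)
The plan is to prove this via the general universal-algebraic machinery of Birkhoff's subdirect representation theorem, specialized to pseudo effect algebras with RDP$_1$. The key observation is that Theorem~\ref{th:2.2} gives a categorical equivalence between pseudo effect algebras with RDP$_1$ and unital po-groups with RDP$_1$ via the functor $\Gamma$. Birkhoff's theorem asserts that every algebra in a variety (or more generally, every algebra) is a subdirect product of subdirectly irreducible quotients, obtained by intersecting down through the congruence lattice. So **first I would** recall that $E$, being a pseudo effect algebra with RDP$_1$, has a congruence lattice in which the trivial congruence $\Delta$ is the intersection of a family of congruences $\{\theta_t : t\in T\}$ each of which is \emph{completely meet-irreducible} (i.e., the quotient $E/\theta_t$ is subdirectly irreducible). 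This is the standard argument: for each pair $a\neq b$ in $E$, Zorn's lemma produces a congruence $\theta_{a,b}$ maximal with respect to not identifying $a$ and $b$, and such a congruence is completely meet-irreducible; the intersection of all these separates points, hence equals $\Delta$.

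**The second step** is to verify that each quotient $E/\theta_t$ again satisfies RDP$_1$, so that the subdirectly irreducible factors live in the same class. Here I would use the correspondence between congruences and (normal, Riesz) ideals established in the cited work \cite{DvVe3} for pseudo effect algebras with RDP$_1$: a congruence $\theta$ corresponds to a normal ideal $I_\theta$, and the quotient $E/\theta\cong E/I_\theta$ is again a pseudo effect algebra with RDP$_1$. Concretely, under the $\Gamma$-equivalence, $\theta$ corresponds to an o-ideal $H$ of the representing unital po-group $(G,u)$, and $E/\theta\cong\Gamma(G/H, u/H)$, where $G/H$ inherits RDP$_1$ as a quotient of a po-group with RDP$_1$. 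This preservation of RDP$_1$ under passage to quotients is the technical linchpin that keeps us inside the stated class.

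**The main obstacle** I anticipate is precisely this last point: confirming that RDP$_1$ (as opposed to the weaker RDP$_0$ or RIP) descends to quotients, and that the canonical map $h\colon E\to\prod_{t\in T}E/\theta_t$ is a subdirect embedding in the sense defined in the excerpt (injective, with each projection $\pi_t\circ h$ surjective onto $E/\theta_t$). Injectivity follows from $\bigcap_t\theta_t=\Delta$, and surjectivity of each projection is automatic because the quotient maps are surjective. The subtlety is entirely in establishing that RDP$_1$ passes to $G/H$; this hinges on the commutativity clause in the definition of RDP$_1$ behaving well modulo the o-ideal $H$, which is where \cite{DvVe3} does the real work. **In summary**, the proof assembles three ingredients: (1) Birkhoff-style subdirect decomposition into completely meet-irreducible (subdirectly irreducible) factors, (2) the ideal–congruence correspondence of \cite{DvVe3} valid under RDP$_1$, and (3) the fact that RDP$_1$ is inherited by quotient po-groups, so that every factor $E/\theta_t$ remains a pseudo effect algebra with RDP$_1$.
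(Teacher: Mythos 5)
Your overall strategy --- reduce to the unital po-group $(G,u)$ via Theorem~\ref{th:2.2}, run a Zorn's-lemma maximality argument, and quote preservation of RDP$_1$ under quotients --- is the same skeleton as the paper's proof. But there is one genuine soft spot in your step (1): you invoke Birkhoff's subdirect representation theorem, and the accompanying ``standard argument'' on the congruence lattice, directly for the pseudo effect algebra $E$. Pseudo effect algebras are \emph{partial} algebras, not a variety of total algebras, so Birkhoff's theorem (e.g.\ \cite[Thm II.8.6]{BuSa}) does not apply off the shelf: for an arbitrary compatible equivalence on a partial algebra the quotient need not be a pseudo effect algebra at all, and the correspondence theorem you need in order to conclude that a congruence maximal with respect to separating $a$ from $b$ yields a \emph{subdirectly irreducible} quotient is not automatic. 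This is precisely the complication the paper flags in the sentence immediately preceding the lemma (``For pseudo effect algebras, the relation between congruences and ideals is more complicated\dots''). In your write-up the congruence--ideal correspondence of \cite{DvVe3} appears only in step (2), as a device for showing that RDP$_1$ descends to the factors; in fact it is needed \emph{before} step (1) can even be run, to identify the congruences of $E$ with normal ideals (equivalently, with o-ideals of the representing group $G$) and thereby validate the whole lattice-theoretic argument.

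The paper avoids this issue altogether by running the maximality argument inside the group rather than on congruences of $E$: for each nonzero $g \in G$ it chooses, by Zorn's lemma, an o-ideal $N_g$ of $G$ maximal with respect to $g \notin N_g$ (chains of o-ideals cause no trouble), notes that $\bigcap_{g\neq 0} N_g = \{0\}$, that $G/N_g$ is subdirectly irreducible because the o-ideal generated by $g/N_g$ is its least nontrivial o-ideal, and then pulls everything down to $E$ via the normal ideals $N_g \cap [0,u]$, citing \cite[Prop 4.1]{185} for RDP$_1$ of the quotient pseudo effect algebras. Note also that in the group it suffices to separate each nonzero $g$ from $0$, rather than arbitrary pairs of elements as in your congruence formulation. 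Your proof becomes correct --- and essentially identical to the paper's --- once you either move the correspondence of \cite{DvVe3} to the front, or simply transport the whole Zorn argument to o-ideals of $G$ as the paper does.
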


\begin{proof}
If $E$ is a trivial pseudo effect algebra, $E$ is subdirectly irreducible. Thus, suppose $E$ is not trivial. By \cite[Thm 7.2]{DvVe2}, RDP$_1$ entails that $E \cong \Gamma(G,u)$ for some unital po-group $(G,u),$ where $G$ satisfies RDP$_1$; for simplicity, we assume $E \subset \Gamma(G,u).$ Given a nonzero element $g\in G,$ let $N_g$ be an o-ideal of $G$ which is maximal with respect to not containing $g$; by Zorn's Lemma, it exists. Then $\bigcap_{g\ne 0} N_g=\{0\}$ and $N_g\cap[0,u]$ is a normal ideal of $E.$  Therefore, $G \leq \prod_{g\ne 0}G/N_g$ and $E \le \prod_{g\ne 0}E/(N_g\cap [0,u]).$ In addition, for every $g\ne 0,$ the normal ideal of $G/N_g$ generated by $g/N_g$ is the least non-trivial o-ideal of $G/N_g$ which proves that every $G/N_g$ is subdirectly irreducible. Therefore, every $E/(N_g\cap [0,u])$ is a subdirectly irreducible pseudo effect algebra, and by \cite[Prop 4.1]{185}, the quotient pseudo effect algebra $E/(N_g\cap [0,u])$ also satisfies RDP$_1.$
\end{proof}

\begin{lemma}\label{le:7.2}
Let $G$ be a directed po-group with \RDP$_1$ subdirectly represented as
$G\leq \prod_{s\in  S}G_s,$ where each po-group $G_s,$ $s \in S,$ is directed and satisfies \RDP$_1.$  Then any kite
$K_{I}^{\lambda,\rho}(G)_{ea}$ is subdirectly represented as
$K_{I}^{\lambda,\rho}(G)_{ea}\leq
\prod_{s\in S}K_{I}^{\lambda,\rho}(G_s)_{ea}$.
\end{lemma}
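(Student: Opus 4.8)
The plan is to transport the given subdirect representation of $G$ through the functor $K_{I}^{\lambda,\rho}$. Write $h\colon G \to \prod_{s\in S}G_s$ for the injective po-group homomorphism witnessing $G \le \prod_{s\in S}G_s$, and set $h_s := \pi_s\circ h\colon G \to G_s$, so that each $h_s$ is a surjective homomorphism of po-groups. Since each $G_s$ is a po-group, Theorem~\ref{th:3.4} guarantees that each $K_{I}^{\lambda,\rho}(G_s)_{ea}$ is a pseudo effect algebra, so the target product makes sense. Applying the functor to each $h_s$ yields pseudo effect algebra homomorphisms $K_{I}^{\lambda,\rho}(h_s)\colon K_{I}^{\lambda,\rho}(G)_{ea} \to K_{I}^{\lambda,\rho}(G_s)_{ea}$, and I would assemble them into a single map
$$
H\colon K_{I}^{\lambda,\rho}(G)_{ea} \to \prod_{s\in S}K_{I}^{\lambda,\rho}(G_s)_{ea},\qquad \pi_s\circ H = K_{I}^{\lambda,\rho}(h_s).
$$
Because the partial addition, $0$, and $1$ in the product are computed coordinatewise and each $K_{I}^{\lambda,\rho}(h_s)$ respects them (by functoriality), $H$ is a homomorphism of pseudo effect algebras. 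Thus the whole task reduces to checking that $H$ is injective and that each $\pi_s\circ H$ is onto $K_{I}^{\lambda,\rho}(G_s)_{ea}$.

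For injectivity I would use that $K_{I}^{\lambda,\rho}(h_s)$ preserves the partition of the universe into the positive part $(G^+)^I$ and the negative part $(G^-)^I$: a positive element $\langle f_j\rangle$ is sent to $\langle h_s(f_j)\rangle\in (G_s^+)^I$ and a negative element $\langle a_i^{-1}\rangle$ to $\langle h_s(a_i^{-1})\rangle\in (G_s^-)^I$. Hence if $H(x)=H(y)$, then $x$ and $y$ must lie on the same side, since their images lie in the disjoint copies $\prod_s(G_s^+)^I$ and $\prod_s(G_s^-)^I$ respectively (recall $e^{-1}\ne e$), which cannot coincide. Comparing coordinates then gives $h_s(x_k)=h_s(y_k)$ for every $s\in S$ and every coordinate $k\in I$, and injectivity of $h=\langle h_s\colon s\in S\rangle$ forces $x=y$.

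The delicate point, and the step I expect to be the main obstacle, is the surjectivity of $\pi_s\circ H = K_{I}^{\lambda,\rho}(h_s)$. Lifting an element $\langle \phi_j\rangle\in (G_s^+)^I$ requires a preimage inside $(G^+)^I$, that is, it requires $h_s(G^+)=G_s^+$, and dually $h_s(G^-)=G_s^-$; mere surjectivity of the group homomorphism $h_s$ is \emph{not} enough, since a surjective homomorphism between directed po-groups need not carry the positive cone onto the positive cone. I would dispose of this by recalling that the subdirect representation is realized through o-ideals, so that $G_s\cong G/N_s$ carries the quotient order: if $xN_s\ge eN_s$ then $x\ge n$ for some $n\in N_s$, whence $xn^{-1}\in G^+$ represents the same class, and symmetrically for the negative cone. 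Consequently $(G/N_s)^+$ is exactly the image of $G^+$, every $\langle \phi_j\rangle$ lifts coordinatewise, $K_{I}^{\lambda,\rho}(h_s)$ is onto, and $H$ exhibits $K_{I}^{\lambda,\rho}(G)_{ea}$ as the asserted subdirect product of the kites $K_{I}^{\lambda,\rho}(G_s)_{ea}$.
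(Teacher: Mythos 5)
Your proposal is correct and follows essentially the same route as the paper: the paper's entire proof consists of writing down your map $H$ (there called $h^{\lambda,\rho}_I$), sending $\langle f_j\colon j\in I\rangle\mapsto\langle\langle\pi_s(h(f_j))\colon j\in I\rangle\colon s\in S\rangle$ and similarly on the negative part, and asserting that it exhibits the subdirect representation; the homomorphism property, injectivity, and surjectivity of the projections are left entirely to the reader, and your verifications of the first two match what is implicit there.

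The one place you go beyond the paper is the ``delicate point'' you flag, and you are right that it is genuine: surjectivity of $\pi_s\circ H$ requires $h_s(G^+)=G_s^+$ and $h_s(G^-)=G_s^-$, whereas the paper's definition of subdirect product for po-groups literally demands only that $h_s$ be onto as a map. That is strictly weaker: for instance, the identity map from $\mathbb Z\times\mathbb Z$ with the coordinatewise order onto $\mathbb Z\lex\mathbb Z$ is a bijective monotone homomorphism between directed po-groups with \RDP$_1$, yet the image of the positive cone is properly contained in the target's positive cone, so the induced map of kites is not onto. The paper's proof is silent on this. Your fix --- reading the hypothesis as saying that each $G_s$ is an o-ideal quotient $G/N_s$ carrying the quotient order, so that every class in $(G/N_s)^+$ has a representative in $G^+$ --- is the right one, and it is exactly what holds in the only application of the lemma (Theorem \ref{th:7.5}, whose factors come from Lemma \ref{le:7.3}(2) as quotients by o-ideals $N_g$). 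Just state explicitly that this is a convention about what ``subdirectly represented'' means for po-groups (projections are order-quotient maps) rather than a consequence of the hypotheses as literally written; with that reading made explicit, your proof is complete and more careful than the paper's.
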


\begin{proof}
Let $h\colon G \to \prod_{s\in S}G_s$ be an injective homomorphism of po-groups corresponding to the subdirect product $G \leq \prod_{s \in S}G_s,$ i.e. $\pi_s(h(G))=G_s,$ $s \in S.$ Then the mapping $h^{\lambda,\rho}_I\colon K_{I}^{\lambda,\rho}(G)_{ea} \to \prod_{s\in S}K_{I}^{\lambda,\rho}(G_s)_{ea},$ defined by $h^{\lambda,\rho}_I(\langle a_i^{-1}\colon i \in I\rangle) = \langle \langle \pi_s(h(a_i^{-1}))\colon i \in I\rangle\colon s \in S\rangle$ and
$h^{\lambda,\rho}_I(\langle f_j\colon j \in I\rangle) = \langle \langle \pi_s(h(f_j))\colon j \in I\rangle\colon s \in S\rangle,$ shows $K_{I}^{\lambda,\rho}(G)_{ea}\leq
\prod_{s\in S}K_{I}^{\lambda,\rho}(G_s)_{ea}$.
\end{proof}

\begin{lemma}\label{le:7.3}
{\rm (1)} Let $G$ be a directed po-group with \RDP$_1$ and $P$ be an o-ideal of $G.$   Then $G/P$ is a directed po-group with \RDP$_1.$

{\rm (2)} Every directed po-group with \RDP$_1$  is a subdirect product of subdirectly irreducible po-groups with \RDP$_1.$
\end{lemma}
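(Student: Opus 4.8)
The plan is to treat the two parts separately, with part (1) carrying essentially all of the work and part (2) following from part (1) together with the Zorn's-lemma construction already used in the proof of Lemma~\ref{le:7.1}. For part (1), recall from Section~2 that $G/P$ is automatically a po-group, with $\bar x\le \bar y$ (writing $\bar x:=x/P$ and $\pi\colon G\to G/P$ for the projection) iff $x\le p+y$ for some $p\in P$. Directedness of $G/P$ is then immediate, since an upper bound of $x,y$ in $G$ projects to an upper bound of $\bar x,\bar y$. The whole difficulty lies in \RDP$_1$, and I would reduce it to a single auxiliary claim: \emph{for every $g\in G^+$ and every $\bar x$ with $0\le \bar x\le \bar g$ there is $x\in G$ with $0\le x\le g$ and $\pi(x)=\bar x$} (interval lifting).

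The proof of this claim is the step I expect to be the main obstacle, and it is where the hypotheses on $P$ are used. Since $G$ satisfies \RDP$_1$, it satisfies \RDP$_0$, hence \RIP. Given $0\le\bar x\le\bar g$, I would first lift $\bar x$ to some $x_0\in G^+$; any class in $(G/P)^+$ has a positive representative, because $\bar x\ge 0$ gives $x\ge q$ for some $q\in P$, and then $-q+x\ge 0$ still represents $\bar x$. From $\bar x\le\bar g$ one gets $x_0\le g+p'$ for some $p'\in P$, and since $P$ is directed one may enlarge $p'$ to $p\in P$ with $p\ge p'$ and $p\ge 0$; then $x_0-p\le g$ and $x_0-p\le x_0$. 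Applying \RIP\ to the pairs $\{0,\,x_0-p\}\le\{x_0,\,g\}$ yields $x$ with $0\le x\le g$ and $x_0-p\le x\le x_0$, and the last two inequalities force $\pi(x)=\bar x$, establishing the claim.

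Granting interval lifting, \RDP$_1$ for $G/P$ should follow cleanly. Suppose $\bar a_1+\bar a_2=\bar b_1+\bar b_2$ in $(G/P)^+$. I would choose positive lifts $a_1,a_2\in G^+$ and set $s:=a_1+a_2$; since $\bar b_1\le\bar s$, interval lifting gives $b_1\in[0,s]$ with $\pi(b_1)=\bar b_1$, and then $b_2:=-b_1+s\in G^+$ satisfies $b_1+b_2=s=a_1+a_2$ and $\pi(b_2)=\bar b_2$, so the equation now holds exactly in $G^+$. Applying \RDP$_1$ in $G$ produces $c_{11},c_{12},c_{21},c_{22}\in G^+$ refining it, and their images $\bar c_{ij}$ refine $\bar a_1+\bar a_2=\bar b_1+\bar b_2$ in $G/P$. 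For the commutativity side-condition, given $0\le\bar x\le\bar c_{12}$ and $0\le\bar y\le\bar c_{21}$, interval lifting produces $x\in[0,c_{12}]$ and $y\in[0,c_{21}]$ with $\pi(x)=\bar x$ and $\pi(y)=\bar y$; the \RDP$_1$ condition in $G$ then gives $x+y=y+x$, and projecting yields $\bar x+\bar y=\bar y+\bar x$. This completes part (1).

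For part (2), I would repeat the construction from the proof of Lemma~\ref{le:7.1}: for each $g\ne 0$ in $G$ pick, by Zorn's Lemma, an o-ideal $N_g$ maximal with respect to $g\notin N_g$. Then $\bigcap_{g\ne 0}N_g=\{0\}$, so the diagonal map realizes $G$ as a subdirect product $G\le\prod_{g\ne 0}G/N_g$. By maximality of $N_g$, every nonzero o-ideal of $G/N_g$ contains $\bar g$, so the o-ideal generated by $\bar g$ is the least nonzero o-ideal of $G/N_g$, whence each $G/N_g$ is subdirectly irreducible. Finally, part (1) guarantees that each quotient $G/N_g$ is again a directed po-group with \RDP$_1$, which is exactly what is required.
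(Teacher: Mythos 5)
Your proof is correct, but for part (1) it takes a genuinely different route from the paper's. The paper works with the equation $[g_1]_P+[g_2]_P=[h_1]_P+[h_2]_P$ only \emph{approximately} lifted to $G$: it chooses $e,f\in P^+$ with $g_1+g_2=((h_1+h_2)-f)+e$, applies \RDP$_1$ in $G$ \emph{twice} (once to this equation, once to $c_{11}+c_{21}+f=h_1+h_2$), and then observes that all the error pieces $c_{12},c_{22},d_{31},d_{32}$ lie in $P$ and vanish in the quotient; the commutativity side-condition is handled by asserting that classes below $[d_{12}]_P$ and $[d_{21}]_P$ have representatives below $d_{12}$ and $d_{21}$. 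You instead isolate an \emph{exact interval-lifting} lemma --- every class in $[0,\bar g]$ has a representative in $[0,g]$ --- proved from \RIP\ together with directedness of $P$, use it to lift the whole equation to an identity $a_1+a_2=b_1+b_2$ holding on the nose in $G^+$, apply \RDP$_1$ only \emph{once}, and project. Your organization buys two things: the single application of \RDP$_1$ makes the bookkeeping shorter and avoids the paper's three-term refinement step, and your lifting lemma makes rigorous precisely the step the paper glosses over in its side-condition argument (one needs representatives $x_1$ with $0\le x_1\le d_{12}$, not merely $x_1\le d_{12}$, for the \com\ property to apply); the paper's double-refinement argument, on the other hand, needs only \RDP$_1$ applied to positive elements and never names \RIP\ explicitly, though the existence of the initial $e,f\in P^+$ likewise uses directedness of $P$ silently. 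Your part (2) coincides with the paper's, which simply invokes the Zorn's-lemma construction from Lemma~\ref{le:7.1}, with your part (1) supplying the fact that the quotients $G/N_g$ are again directed with \RDP$_1$.
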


\begin{proof}
(1) Let $G$ be a directed po-group with \RDP$_1$ and $P$ an o-ideal of $G.$ We denote by $g/P=[g]_P:=\{h\in G \colon g-h\in P\}$ the quotient class corresponding to an element $g \in G.$  Since every element of $G$ is expressible as a difference of two positive elements, hence, $G/P$ is a directed po-group. In addition, if $[g]_P \ge 0,$ there is an element $g_1 \in [g]_P$ such that $g_1 \in G^+.$

Let $[g_1]_P + [g_2]_P = [h_1]_P
+ [h_2]_P$ for some positive elements $g_1,g_2,h_1,h_2\in G^+.$  There are $e,f \in P^+$
such that $(g_1 +g_2)- e = (h_1 +h_2)- f\ge 0$, so that
$g_1 +g_2 = ((h_1+h_2) -f) + e\ge 0.$ By RDP$_1$ holding in $G$,
there are $c_{11}, c_{12}, c_{21}, c_{22}$ in $G^+$ such that
\begin{eqnarray*}
g_1 = c_{11} + c_{12},& &(h_1+h_2)- f = c_{11} + c_{21},\\
g_2 = c_{21} + c_{22}, & & e = c_{12} + c_{22}.
\end{eqnarray*}
This gives $c_{11} + c_{21} + f = h_1 + h_2$. Again due to
RDP$_1$, there are $d_{11}, d_{12}, d_{21},$ $ d_{22}, d_{31},
d_{31} \in G^+$ with $d_{12} \ {\mbox{\bf com}} \ (d_{21} +d_{31})$
such that
\begin{eqnarray*}
c_{11} = d_{11} +d_{12},& & b_1 = d_{11} + d_{21} + d_{31},\\
c_{21} = d_{21} +d_{22}, & & b_2 = d_{12}+ d_{22} + d_{32},\\
f = d_{31} + d_{32}. & &
\end{eqnarray*}

It is clear that $c_{12}, c_{22}, d_{31}, d_{32} \in P$, which
gives $[g_1]_P = [c_{11}]_P = [d_{11}]_P + [d_{12}]_P,$ $ [g_2]_P
= [c_{21}]_P = [d_{21}]_P +[d_{22}]_P, $ $[h_1]_P = [d_{11}]_P
+[d_{21}]_P,$ $[h_2]_P = [d_{12}]_P+[d_{22}]_P.$  Assume $[x]_P
\le [d_{12}]_P$ and $[y]_P \le [d_{21}]_P.$ Then there are $x_1
\in [x]_P$ and $y_1 \in [y]_P$ such that $x_1 \le d_{12} $ and
$y_1 \le d_{21}$, i.e., $[d_{12}]_P \ {\mbox{\bf com}} \
[d_{21}]_P$ which proves that $G/P$ is with RDP$_1$.

(2)  It follows the same ideas as the proof of Proposition \ref{le:7.1}.
\end{proof}

According to \cite{DvKo, DvuK}, we say that elements
$i,j\in I$ are {\it connected} if there is an integer $m \ge 0$ such that $(\rho\circ\lambda^{-1})^m(i)= j$ or $(\lambda\circ \rho^{-1})^m(i)= j$; otherwise, $i$ and $j$ are said to be {\it disconnected}. We note that (i) every element $i\in I$ is connected to $i$ because $(\rho\circ\lambda^{-1})^0(i)= i,$  (ii) $i$ is connected to $j$ iff $j$ is connected to $i,$ and (iii) if $i$ and $j$ are connected and also $j$ and $k$ are connected, then $i$ and $k$ are connected, too.
Indeed, let (a) $(\rho\circ\lambda^{-1})^m(i)= j$ and $(\rho\circ\lambda^{-1})^n(j)= k$ for some integers $m,n \ge 0.$ Then $(\rho\circ\lambda^{-1})^{m+n}(i)= k.$ (b) If $(\rho\circ\lambda^{-1})^m(i)= j$ and $(\lambda\circ\rho^{-1})^n(j)= k$ for some integers $m,n \ge 0,$ then $(\rho\circ \lambda^{-1})^n(k)=j.$ Then $(\rho\circ\lambda^{-1})^m(i)= (\rho\circ \lambda^{-1})^n(k).$ If $m=n,$ then $i=k$ and they are connected, otherwise, $n<m$ or $m<n.$ In the first case  $(\rho\circ\lambda^{-1})^{m-n}(i) =k,$ in the second one $(\lambda\circ\rho^{-1})^{n-m}(i) =k,$ proving $i$ and $k$ are connected. In the same way we deal for the rest of transitivity.

Consequently, the relation $i$ and $j$ are connected, is an equivalence on $I$, and every equivalence class defines a subset of indices such that every two distinct elements of it are connected and
the equivalence class is maximal under this property. We call this equivalence class a {\it connected component} of $I.$
Another property: for each $i \in I,$ $\lambda(i)$ and $\rho(i)$ are connected.

In a dual way we can say that $i,j\in I$ are {\it dually connected} if there is an integer $m \ge 0$ such that $(\rho^{-1}\circ\lambda)^m(i)= j$ or $(\lambda^{-1}\circ \rho)^m(i)= j$; otherwise, $i$ and $j$ are said to be {\it dually disconnected}. Similarly as above, this notion defines an equivalence on $I$ and any equivalence class is said to be a {\it dual component} of $I,$ and it is a set of mutually dually connected indices and maximal under this property. There holds: (i) for each $i \in I,$ $\lambda^{-1}(i)$ and $\rho^{-1}(i)$ are dually connected. (ii) If two distinct elements $i,j\in I$  are dually connected, then $\lambda(i)$ and $\rho(j)$ or $\rho(i)$ and $\lambda(j)$ are dually connected. (iii) If $i$ and $j$ are dually connected, then $\lambda(i)$ and $\rho(j)$ or $\rho(i)$ and $\lambda(j)$ are connected. (iv) If $i$ and $j$ are connected, then $\lambda^{-1}(i)$ and $\rho^{-1}(j)$ are dually connected and $\rho^{-1}(i)$ and $\rho^{-1}(j)$ are connected  or $\rho^{-1}(i)$ and $\lambda^{-1}(j)$ are dually connected and $\lambda^{-1}(i)$ and $\lambda^{-1}(j)$ are  connected.

If $C$ is a connected component of $I,$ then $\lambda^{-1}(C)= \rho^{-1}(C).$  Indeed, let $i \in C$ and $k=\lambda^{-1}(i).$ Then $j=\rho (k)= \rho \circ \lambda^{-1}(i) \in C.$ Hence, $k=\rho^{-1}(j)$ which proves $\lambda^{-1}(C)\subseteq \rho^{-1}(C).$ In the same way we prove the opposite inclusion. In particular, we have $\lambda(\rho^{-1}(C))= \rho(\lambda^{-1}(C)) = C=\lambda(\lambda^{-1}(C))= \rho(\rho^{-1}(C)).$

We note that if $C$ is a connected component, then $\lambda^{-1}(C)$ is not necessarily $C.$ In fact, let $I=\{1,2\}$ and $\lambda(1)=2=\rho(1)$ and $\lambda(2)=1=\rho(2).$ Then $I$ has only two connected components $\{1\}$ and $\{2\}$, and $\lambda^{-1}(\{1\})=\{2\}$ and $\lambda^{-1}(\{2\})=\{1\}.$

It is possible to define kite pseudo effect algebras also in the following way. Let $J$ and $I$ be two sets, $\lambda,\rho\colon J \to I$ be two bijections, and $G$ be a po-group. We define $K^{\lambda,\rho}_{J,I}(G)= (G^+)^J \uplus (G^-)^I,$ where $\uplus$ denotes a union of disjoint sets. The elements $0=\langle e_j\colon j \in J\rangle$ and $1=\langle e^{-1}_i\colon i \in I\rangle$ are assumed to be the least and greatest elements of $ K^{\lambda,\rho}_{J,I}.$ The elements in $(G^+)^J$ and in $(G^-)^I$ are ordered by coordinates and for each elements $x \in (G^+)^J$ and $y \in (G^-)^I$ we have $x \le y.$ If we define a partial operation $+$ by Theorem \ref{th:3.4}, changing in formulas $(II)--(IV)$ the notation $j \in I$ to $j \in J,$ we obtain that $K_{J,I}^{\lambda,\rho}(G)_{ea} =(K^{\lambda,\rho}_{J,I};+,0,1) $ with this $+$ $0$ and $1$ is a pseudo effect algebra, called also a {\it kite pseudo effect algebra.} In particular, $ K^{\lambda,\rho}_{I}(G)_{ea} = K^{\lambda,\rho}_{I,I}(G)_{ea}.$

Since $J$ and $I$ are of the same cardinality, there is practically no substantial difference between kite pseudo effect algebras of the form $K^{\lambda,\rho}_{I}(G)_{ea}$ and $K^{\lambda,\rho}_{J,I}(G)_{ea}$ and all known results holding for the first kind are also valid for the second one. We note that in \cite{DvKo}, the ``kite" structure used two index sets, $J$ and $I.$ The second form will be useful for the following result.

\begin{lemma}\label{le:7.4}
Let $K_{I}^{\lambda,\rho}(G)_{ea}$ be a kite pseudo effect algebra of a directed po-group $G,$ where $G$ satisfies \RDP$_1.$ Then
$K_{I}^{\lambda,\rho}(G)_{ea}$ is a subdirect
product of the system of kite pseudo effect algebras $(K_{J',I'}^{\lambda',\rho'}(G)_{ea}\colon I')$, where $I'$ is a
connected component of the graph $I$, $J'=\lambda^{-1}(I')=\rho^{-1}(I'),$ and $\lambda',\rho'\colon J'\to I'$ are the restrictions of $\lambda$ and $\rho$ to $J'\subseteq I.$
\end{lemma}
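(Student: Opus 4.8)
The plan is to exhibit an explicit injective homomorphism whose coordinates are the ``restriction to a connected component'' maps, and then to check that the pseudo effect algebra structure of the big kite factors over the components. It is cleanest to work in the two-index formulation $K_{J,I}^{\lambda,\rho}(G)_{ea}$ recalled above, so that the positive cone is indexed by $J=I$ and the negative cone by $I$. First I would record the combinatorial facts already established: the components $\{I'\}$ partition $I$, and since $\lambda^{-1},\rho^{-1}$ are bijections of $I$ with $\lambda^{-1}(I')=\rho^{-1}(I')=J'$ for each component $I'$, the sets $\{J'\}$ also partition $I$, and each restriction $\lambda'=\lambda|_{J'}$, $\rho'=\rho|_{J'}$ is a bijection $J'\to I'$. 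Hence each $K_{J',I'}^{\lambda',\rho'}(G)_{ea}$ is again a kite pseudo effect algebra by Theorem \ref{th:3.4}.

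Next I would define $\Phi\colon K_{I}^{\lambda,\rho}(G)_{ea}\to \prod_{I'}K_{J',I'}^{\lambda',\rho'}(G)_{ea}$ by restriction: a negative element $\langle a_i^{-1}\colon i\in I\rangle$ is sent to the tuple whose $I'$-entry is $\langle a_i^{-1}\colon i\in I'\rangle$, and a positive element $\langle f_j\colon j\in I\rangle$ to the tuple whose $I'$-entry is $\langle f_j\colon j\in J'\rangle$. By construction $\Phi$ preserves the positive/negative type coordinatewise and sends $0\mapsto 0$ and $1\mapsto 1$.

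The main work is to show $\Phi$ is a homomorphism, i.e.\ that each of the operations (II), (III), (IV) is respected. Operation (IV) is coordinatewise, hence trivially compatible with any partition of the positive index set. For (II), the $i$-th coordinate of the sum uses $f_{\rho^{-1}(i)}$; since $\rho^{-1}(i)\in J'$ precisely when $i\in I'$, and $\rho'^{-1}=\rho^{-1}|_{I'}$, the restriction to $I'$ of the big sum agrees with the sum computed inside the component kite. Crucially, the definedness condition ``$f_{\rho^{-1}(i)}\le a_i$ for all $i\in I$'' is universally quantified over $I=\bigsqcup_{I'}I'$, so it holds in the big kite if and only if it holds in every component; this is exactly what is needed for $\Phi$ to preserve both the existence and the value of sums. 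Operation (III) is handled identically using $\lambda^{-1}$ and $\lambda'^{-1}$.

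Finally, injectivity follows because the entries of an element are recovered from its restrictions: $\{I'\}$ partitions the negative index set and $\{J'\}$ partitions the positive index set, so no coordinate is lost and distinct elements have distinct images. Surjectivity of each projection $\pi_{I'}\circ\Phi$ comes from an extend-and-restrict argument: any element of $K_{J',I'}^{\lambda',\rho'}(G)_{ea}$ extends to an element of the big kite by filling the remaining coordinates with $e$, and this extension restricts back to the given element. Thus $\Phi$ realizes the claimed subdirect representation. I expect the only genuinely delicate point to be the compatibility of the definedness conditions with the decomposition; this reduces entirely to the structural identity $\lambda^{-1}(I')=\rho^{-1}(I')=J'$, which guarantees that the coordinate-shuffling in (II) and (III) never crosses between distinct components. (Note that directedness and \RDP$_1$ are not actually used in this decomposition argument; they are carried along only to ensure the factors are of the intended form.)
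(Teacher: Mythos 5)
Your proof is correct, and it rests on exactly the same decomposition as the paper's: connected components $I'$, the identity $J'=\lambda^{-1}(I')=\rho^{-1}(I')$, and coordinate maps given by restriction. The difference is how the subdirect representation is certified. The paper never writes down your map $\Phi$ explicitly; instead, for each component $I'$ it introduces the normal ideal $N_{I'}$ of all $\langle f_j\colon j\in I\rangle\in(G^+)^I$ with $f_j=e$ for $j\in J'$, asserts that $K_{I}^{\lambda,\rho}(G)_{ea}/N_{I'}\cong K_{J',I'}^{\lambda',\rho'}(G)_{ea}$, and deduces the subdirect embedding from $\bigcap_{I'}N_{I'}=\{0\}$. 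Your $\Phi$ is precisely the composite of the canonical map into $\prod_{I'}K_{I}^{\lambda,\rho}(G)_{ea}/N_{I'}$ with those isomorphisms, so the two arguments produce the same embedding; they differ in packaging, and each buys something. The ideal-theoretic route is shorter and ties in with the normal-ideal language used elsewhere in the paper (notably in the characterization of subdirectly irreducible kites). Your route buys something more substantial: forming the quotient of a pseudo effect algebra by a normal ideal requires knowing that the ideal induces a congruence, which the paper has available under RDP$_1$ (supplied here by $G$ having RDP$_1$, whence the kite has it by \cite[Thm 4.1]{DvuK}) or for Riesz ideals; your direct verification of (II)--(IV), of the definedness conditions, of injectivity, and of surjectivity of the projections bypasses quotients entirely, so---as you note at the end---the lemma actually holds for an arbitrary po-group $G$, with directedness and RDP$_1$ needed only downstream in Theorem~\ref{th:7.5}. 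Your isolation of the key point, namely that the coordinate shuffles $\rho^{-1},\lambda^{-1}$ in (II) and (III) never cross components precisely because $\lambda^{-1}(I')=\rho^{-1}(I')=J'$, is exactly the fact on which the paper's unproved isomorphism claim $K_{I}^{\lambda,\rho}(G)_{ea}/N_{I'}\cong K_{J',I'}^{\lambda',\rho'}(G)_{ea}$ rests, so your write-up in effect supplies the detail the paper leaves to the reader.
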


\begin{proof}
By the comments before this lemma, we see that $\lambda',\rho'\colon J'\to I'$ are bijections.
Let $I'$ be a connected component of $I$.
Let $N_{I'}$ be the set of all elements
$f = \langle f_j: j \in I\rangle\in (G^+)^I$
such that  $f_j = e$ whenever $j\in J'$.
It is straightforward to see that $N_{I'}$ is a normal ideal
of $K_{I}^{\lambda,\rho}(G)_{ea}$. It is also not difficult
to see that $K_{I}^{\lambda,\rho}(G)_{ea}/N_{I'}$ is isomorphic to
$K_{J',I'}^{\lambda',\rho'}(G)_{ea}$.

Now, let $\mathcal{C}$ be the set of all connected components of $I$, and
for each $I'\in\mathcal{C}$ let $N_{I'}$ be the normal filter defined as above.
As connected components are disjoint, we have
$\bigcap_{I'\in\mathcal{C}} N_{I'} = \{0\}.$ This proves $K_{I}^{\lambda,\rho}(G)_{ea}= K_{I,I}^{\lambda,\rho}(G)_{ea}\leq \prod_{I'} K_{J',I'}^{\lambda',\rho'}(G)_{ea}.$
\end{proof}

Now we are ready to formulate an analogue of the Birkhoff representation theorem, \cite[Thm II.8.6]{BuSa} for kite pseudo effect algebras with RDP$_1.$

\begin{theorem}\label{th:7.5}
Every kite pseudo effect algebra with \RDP$_1$ is  a subdirect product of subdirectly irreducible kite pseudo effect algebras with \RDP$_1.$
\end{theorem}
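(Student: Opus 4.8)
The plan is to assemble Lemmas \ref{le:7.2}, \ref{le:7.3}, and \ref{le:7.4} into a single two-stage subdirect decomposition: first split the index set $I$ into its connected components, and then split the underlying po-group $G$ into subdirectly irreducible factors. If the kite is trivial it is subdirectly irreducible and there is nothing to prove, so I would assume $0\neq 1$. Before invoking the lemmas I would record that the hypothesis forces the right structure on $G$: the positive-cone copy $(G^+)^I$ is closed under the kite addition $(IV)$ and its elements are exactly those lying below a member of $(G^+)^I$, so the \RDP$_1$ decompositions of the kite restrict to $(G^+)^I$; passing to a single coordinate then yields \RDP$_1$ for $G^+$, hence for $G$. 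Taking $G$ directed (as is standard for kite constructions), I may therefore assume $G$ is a directed po-group with \RDP$_1$, which is precisely the hypothesis required by the three lemmas.

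First I would apply Lemma \ref{le:7.4} to obtain the subdirect embedding $K_{I}^{\lambda,\rho}(G)_{ea}\leq \prod_{I'\in\mathcal{C}} K_{J',I'}^{\lambda',\rho'}(G)_{ea}$, where $\mathcal{C}$ is the set of connected components of $I$, $J'=\lambda^{-1}(I')=\rho^{-1}(I')$, and $\lambda',\rho'$ are the corresponding restrictions. Next, for each fixed component $I'$, Lemma \ref{le:7.3}(2) gives a subdirect representation $G\leq \prod_{s\in S}G_s$ by subdirectly irreducible directed po-groups $G_s$ with \RDP$_1$, and the two-index analogue of Lemma \ref{le:7.2} lifts this to $K_{J',I'}^{\lambda',\rho'}(G)_{ea}\leq \prod_{s\in S}K_{J',I'}^{\lambda',\rho'}(G_s)_{ea}$. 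Composing the two stages — using the elementary fact that a subdirect embedding of the factors of a subdirect product yields again a subdirect product (injectivity survives composition, and the projections onto the finer factors remain onto) — produces $K_{I}^{\lambda,\rho}(G)_{ea}\leq \prod_{I'\in\mathcal{C}}\prod_{s\in S_{I'}} K_{J',I'}^{\lambda',\rho'}(G_s)_{ea}$.

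It then remains to identify each factor as a subdirectly irreducible kite with \RDP$_1$. Each $K_{J',I'}^{\lambda',\rho'}(G_s)_{ea}$ is a kite whose index set $I'$ is a single connected component and whose po-group $G_s$ is subdirectly irreducible; by the characterization of subdirectly irreducible kites established in \cite{DvuK}, such a kite is subdirectly irreducible. Moreover each factor satisfies \RDP$_1$: being a factor of a subdirect representation, it is a homomorphic image of the original kite $K_{I}^{\lambda,\rho}(G)_{ea}$, and homomorphic images of pseudo effect algebras with \RDP$_1$ again have \RDP$_1$ by \cite[Prop 4.1]{185} (alternatively one traces the chain of quotients via Lemma \ref{le:7.3}(1)). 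This delivers the desired representation.

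The main obstacle I expect is not the bookkeeping of the composition but pinning down and correctly applying the subdirectly irreducible-kite characterization of \cite{DvuK}: the whole argument hinges on the assertion that a kite over a \emph{single} connected component of a subdirectly irreducible po-group is itself subdirectly irreducible, and that the connected-component splitting together with the po-group splitting really exhaust the ways the kite can decompose. Verifying that the atomic factors $K_{J',I'}^{\lambda',\rho'}(G_s)_{ea}$ admit no further proper subdirect reduction — rather than being merely indecomposable with respect to shrinking the index set — is where the substance lies; by contrast the preservation of \RDP$_1$ along homomorphic images is routine given \cite[Prop 4.1]{185} and Lemma \ref{le:7.3}(1).
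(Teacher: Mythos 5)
Your proof is correct and follows essentially the same route as the paper: the same ingredients (Lemmas \ref{le:7.2}, \ref{le:7.3}, \ref{le:7.4}, the subdirect-irreducibility characterization \cite[Thm 6.6]{DvuK}, and \RDP$_1$ of $G$ --- which the paper gets from \cite[Thm 4.1]{DvuK} rather than your inline coordinate argument) assembled into the same two-stage subdirect decomposition. The only difference is cosmetic: you split by connected components first and then refine the po-group (needing the two-index analogue of Lemma \ref{le:7.2}, which the paper explicitly licenses), whereas the paper first reduces to subdirectly irreducible $G$ via Lemmas \ref{le:7.2}--\ref{le:7.3} and then applies Lemma \ref{le:7.4}; both orders yield the same family of factors $K_{J',I'}^{\lambda',\rho'}(G_s)_{ea}$.
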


\begin{proof}
Consider a kite $K_{I}^{\lambda,\rho}(G)$. By \cite[Thm 4.1]{DvuK}, $G$ satisfies RDP$_1.$ If it is not subdirectly
irreducible, then \cite[Thm 6.6]{DvuK} yields two possible cases:
(i) $G$ is not subdirectly irreducible, or
(ii) $G$ is subdirectly irreducible but
there exist $i,j\in I$ such that, for every
$m\in\mathbb N,$ we have $(\rho\circ\lambda^{-1})^m(i)\neq j$ and
$(\lambda\circ\rho^{-1})^m(i)\neq j$. Observe that this happens if and only if
$i$ and $j$ do not belong to the same connected component of $I$.

Now, using Lemma~\ref{le:7.3}
we can reduce (i) to (ii). So, suppose $G$ is subdirectly irreducible.
Then, using Lemma~\ref{le:7.4}, we can subdirectly embed
$K_{I}^{\lambda,\rho}(G)_{ea}$ into
$\prod_{I'} K_{J',I'}^{\lambda',\rho'}(G)_{ea}$, where $I'$
ranges over the connected components of $I,$ $J'=\lambda^{-1}(I')$ and $\lambda', \rho'$ are restrictions of $\lambda, \rho$ to $J'.$ But then, each
$K_{J',I'}^{\lambda,\rho}(G)_{ea}$ is subdirectly irreducible by
\cite[Thm 6.6]{DvuK} and by \cite[Thm 4.1]{DvuK}, it satisfies RDP$_1.$
\end{proof}

We note that if $G$ is an $\ell$-group, the corresponding kite pseudo effect algebra is a pseudo MV-algebra, and Theorem \ref{th:7.5} can be reformulated as follows, see \cite[Cor 5.14]{DvKo}.

\begin{theorem}\label{th:7.6}
The variety of pseudo MV-algebras generated by kite pseudo MV-algebras is generated by all subdirectly irreducible kite pseudo MV-algebras.
\end{theorem}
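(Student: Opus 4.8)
The plan is to read Theorem~\ref{th:7.6} as the pseudo MV-algebra specialization of Theorem~\ref{th:7.5}, combined with the standard fact that the variety generated by a class $\mathcal C$ is its closure $\mathbf H\mathbf S\mathbf P(\mathcal C)$ under homomorphic images, subalgebras, and products. Write $\mathcal K$ for the class of all kite pseudo MV-algebras and $\mathcal K_{si}$ for the subclass of those that are subdirectly irreducible, and write $\mathrm{Var}(-)$ for the generated variety. Since $\mathcal K_{si}\subseteq\mathcal K$, the inclusion $\mathrm{Var}(\mathcal K_{si})\subseteq\mathrm{Var}(\mathcal K)$ is immediate, so the entire content lies in the reverse inclusion; and for that it suffices to prove $\mathcal K\subseteq\mathrm{Var}(\mathcal K_{si})$ and then apply the variety operator to both sides. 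This mirrors the argument of \cite[Cor 5.14]{DvKo} in the BL-algebra setting.

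So I would fix a kite pseudo MV-algebra $A=K_I^{\lambda,\rho}(G)_{ea}$. By Theorem~\ref{th:3.4}, $A$ being a pseudo MV-algebra forces $G$ to be an $\ell$-group, and then $A$ satisfies \RDP$_2,$ hence \RDP$_1.$ Theorem~\ref{th:7.5} therefore exhibits $A$ as a subdirect product of subdirectly irreducible kite pseudo effect algebras with \RDP$_1.$ The key verification is that, when $G$ is an $\ell$-group, these factors are again pseudo MV-algebras. Inspecting the proof of Theorem~\ref{th:7.5}, each factor has the form $K_{J',I'}^{\lambda',\rho'}(G')_{ea}$, where $G'$ is a subdirectly irreducible quotient $G/P$ of $G$ by an o-ideal $P$ (from Lemma~\ref{le:7.3}) and $I'$ ranges over connected components (from Lemma~\ref{le:7.4}). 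An o-ideal of an $\ell$-group is a convex $\ell$-subgroup, so the quotient $G/P$ is again an $\ell$-group; equivalently, \RDP$_2$ survives the passage to such quotients. Hence each $G'$ is an $\ell$-group, and by Theorem~\ref{th:3.4} each factor $K_{J',I'}^{\lambda',\rho'}(G')_{ea}$ is a subdirectly irreducible kite pseudo MV-algebra, i.e.\ a member of $\mathcal K_{si}.$

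It then follows that $A$ embeds as a subalgebra into the direct product $\prod_t B_t$ with every $B_t\in\mathcal K_{si}$, so $A\in\mathbf S\mathbf P(\mathcal K_{si})\subseteq\mathbf H\mathbf S\mathbf P(\mathcal K_{si})=\mathrm{Var}(\mathcal K_{si}).$ Thus $\mathcal K\subseteq\mathrm{Var}(\mathcal K_{si})$ and therefore $\mathrm{Var}(\mathcal K)\subseteq\mathrm{Var}(\mathcal K_{si})$, which together with the trivial inclusion yields the desired equality. The one step that genuinely needs care is the middle paragraph: Theorem~\ref{th:7.5} is phrased for \RDP$_1,$ so the main obstacle is confirming that the decomposition it produces stays inside the class of $\ell$-groups (equivalently, preserves \RDP$_2$ rather than merely \RDP$_1$) under both the o-ideal quotients of Lemma~\ref{le:7.3} and the connected-component restriction of Lemma~\ref{le:7.4}. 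Everything else is a formal application of Birkhoff's $\mathbf H\mathbf S\mathbf P$ theorem together with the observation that a subdirect product is in particular a subalgebra of a direct product.
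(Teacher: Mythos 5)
Your proof is correct and follows essentially the same route as the paper: the paper gives no separate proof of Theorem~\ref{th:7.6}, presenting it merely as the reformulation of Theorem~\ref{th:7.5} in the case where $G$ is an $\ell$-group (with a pointer to \cite[Cor 5.14]{DvKo}), which is exactly the specialize-then-apply-$\mathbf{HSP}$ argument you write out. The substantive detail you supply --- that o-ideal quotients of $\ell$-groups are again $\ell$-groups and the connected-component factors keep the same group, so every subdirect factor produced by Theorem~\ref{th:7.5} is itself a (subdirectly irreducible) kite pseudo MV-algebra --- is precisely what the paper leaves implicit.
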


\section{Kite Pseudo Effect algebras and po-loops}

In this section, we show that every kite pseudo effect algebra is an interval in a po-loop which is not necessarily a po-group (i.e. it is not necessarily an associative po-loop). This construction is accompanied by illustrating examples.

We remind that according to \cite{Fuc}, a {\it po-groupoid} is an algebraic structure $(H;\cdot,\le~)$ such that (i) $(H;\cdot)$ is a groupoid, i.e. $H$ is closed under a multiplication $\cdot,$  and (ii) $H$ is endowed with a partial order $\le$ such that $a\le b$ entails $c\cdot a\cdot d \le c\cdot b\cdot d$ for all $c,d \in H.$  Sometimes for simplicity we will write $ab$ instead of $a\cdot b.$

If the multiplication $\cdot$ is associative, $(H;\cdot,\le)$ is said to be a {\it po-semigroup}. If $(H;\cdot)$ is a quasigroup, i.e.
for each $a$ and $b$ in $H$, there exist unique elements $x$ and $y$ in $H$ such that $a\cdot x=b$ and $y\cdot a=b,$ then $(H;\cdot,\le)$ is said to be a {\it po-quasigroup}. If in addition, a po-quasigroup $(H;\cdot,\le)$ has an identity element (or a neutral element) $e\in H$ such that $x\cdot e=x=e\cdot x$ for each $x \in H,$ then $(H;\cdot,e,\le)$ is said to be a {\it po-loop}. Let $x \in H$ be given. A unique element $x^{-1_r} \in H$ such that $x \cdot x^{-1_r}=e$ is said to be the {\it right inverse} of $x.$ Similarly, a unique element $x^{-1_l}\in H$ such that $x^{-1_l}\cdot x = e$ is said to be the {\it left inverse} of $x.$

Given an element $a$ of a groupoid $H,$ we define (i) $a^1 :=a,$ and (ii) if $n\ge 1,$ then $a^{n+1}:=a^na.$

As in the case of po-groups, an element $u\ge e$ is said to be a {\it strong unit} of a po-groupoid $H$ if  given an element $g \in H,$ there is an integer $n \ge 0$ such that $g \le u^n$; the couple $(H,u)$ is said to be a {\it unital po-groupoid}.

For a unital po-loop $(H,u),$ we define an interval $\Gamma(H,u)=\{h \in H\colon e\le h\le u\}.$

We know that every pseudo effect algebra with RDP$_1,$ \cite{DvVe1, DvVe2}, or every pseudo MV-algebra, \cite{Dvu2}, is an interval of a unital po-group with RDP$_1$ and of a unital $\ell$-group, respectively. A similar result, as we now show, is true also for every kite pseudo effect algebra, however, in this case it is an interval of a unital po-loop, which is not necessarily a po-group.

In what follows, we show that there are unital po-loops $(H,u),$ which are not necessarily associative, such that $\Gamma(H,u)$ can be endowed with a partial operation $+$ such that $(\Gamma(H,u);+,e,u)$ is a pseudo effect algebra, where $a+b$ for $a,b \in \Gamma(H,u)$ is defined in $\Gamma(H,u)$ whenever $ab\le u$ and then $a+b:= ab.$ We note that no assumption on the Riesz Decomposition Property of the kite pseudo effect algebra will be assumed.

\begin{theorem}\label{th:8.1}
For every kite pseudo effect algebra $K_{I}^{\lambda,\rho}(G)_{ea}$, there is a  unital po-loop $(H,u)$ such that $\Gamma(H,u)$ is a pseudo effect algebra and $K_{I}^{\lambda,\rho}(G)_{ea}$ is isomorphic to $\Gamma(H,u).$
\end{theorem}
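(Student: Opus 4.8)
The plan is to exhibit $H$ concretely as a $\mathbb{Z}$-graded twisted product built on $G^{I}$, in which the bijections $\lambda$ and $\rho$ enter the multiplication as coordinate-permutations whose failure to commute is exactly what destroys associativity. I would take as underlying set $H:=\mathbb{Z}\times G^{I}$, writing an element as $(m,\phi)$ with $m\in\mathbb{Z}$ and $\phi=\langle\phi_{i}:i\in I\rangle\in G^{I}$. For $\sigma\in\mathrm{Sym}(I)$ put $\phi\circ\sigma:=\langle\phi_{\sigma(i)}:i\in I\rangle$, and let $\lambda^{-n},\rho^{-m}$ be the indicated integer powers of $\lambda,\rho$ in $\mathrm{Sym}(I)$. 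I would define
\[
(m,\phi)\cdot(n,\psi):=\bigl(m+n,\ (\phi\circ\lambda^{-n})\cdot(\psi\circ\rho^{-m})\bigr),
\]
the second coordinate being the coordinatewise product in $G^{I}$, and order $H$ lexicographically with $\mathbb{Z}$ dominant: $(m,\phi)\le(n,\psi)$ iff $m<n$, or $m=n$ and $\phi\le\psi$ coordinatewise. Set $u:=(1,e^{I})$.

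Next I would check that $(H;\cdot,\le)$ is a unital po-loop with neutral element $(0,e^{I})$ and strong unit $u$. The element $(0,e^{I})$ is a two-sided identity because the twisting permutations fix the constant sequence $e^{I}$. Given $(m,\phi)$ and a target $(r,\chi)$, the equation $(m,\phi)\cdot(n,\psi)=(r,\chi)$ forces $n=r-m$ and then determines $\psi$ uniquely (solve in the group $G^{I}$ and undo the permutation $\rho^{-m}$); symmetrically for left division. Thus $H$ is a quasigroup with identity, i.e. a loop. For the order I would verify that left and right translations are monotone: on the dominant coordinate this is clear, and in the tie case it follows since precomposition with a permutation preserves the coordinatewise order and $G$ is a po-group. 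An easy induction gives $u^{n}=(n,e^{I})$, so every $(m,\phi)$ lies below $u^{n}$ for suitable $n\ge 1$; hence $u$ is a strong unit.

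Then I would compute $\Gamma(H,u)=\{(m,\phi):(0,e^{I})\le(m,\phi)\le(1,e^{I})\}$ and find it equals $(\{0\}\times(G^{+})^{I})\cup(\{1\}\times(G^{-})^{I})$, which is in bijection with $(G^{+})^{I}\uplus(G^{-})^{I}$ by $(0,\langle f_{j}\rangle)\mapsto\langle f_{j}\rangle$ and $(1,\langle a_{i}^{-1}\rangle)\mapsto\langle a_{i}^{-1}\rangle$. This bijection is an order isomorphism, and I would check that it intertwines the partial operations: the induced $a+b:=ab$ (defined precisely when $ab\le u$) reproduces the four clauses of Theorem~\ref{th:3.4}. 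Indeed, level $0$ times level $0$ stays at level $0$ and yields clause (IV); level $0$ times level $1$ gives $(1,(\phi\circ\lambda^{-1})\cdot a^{-1})$, which is $\le u$ exactly when $f_{\lambda^{-1}(i)}\le a_{i}$, recovering clause (III); level $1$ times level $0$ gives $(1,a^{-1}\cdot(\phi\circ\rho^{-1}))$ with admissibility $f_{\rho^{-1}(i)}\le a_{i}$, recovering clause (II); and level $1$ times level $1$ lands at level $2>1$, so the product exceeds $u$ and the sum is undefined, recovering clause (I). Since the kite is a pseudo effect algebra by Theorem~\ref{th:3.4}, transporting the structure along the isomorphism shows $\Gamma(H,u)$ is a pseudo effect algebra isomorphic to $K_{I}^{\lambda,\rho}(G)_{ea}$.

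The main obstacle, and the real content, is finding the correct multiplication and confirming that it is only a loop. The guiding principle is that the left factor must be twisted by a power of $\lambda$ read off from the right factor's grade, while the right factor is twisted by a power of $\rho$ read off from the left factor's grade; matching this against clauses (II) and (III) pins down the formula. A direct expansion of the triple product then shows that $((l,\alpha)(m,\beta))(n,\gamma)$ and $(l,\alpha)((m,\beta)(n,\gamma))$ agree in every coordinate except the middle one, where they differ by applying $\rho^{-l}\circ\lambda^{-n}$ versus $\lambda^{-n}\circ\rho^{-l}$ to $\beta$. Hence $H$ is associative, a genuine po-group, exactly when $\lambda$ and $\rho$ commute, and is a properly non-associative po-loop otherwise; this is consistent with the abstract's remark that the kite can be noncommutative even when $G$ is Abelian.
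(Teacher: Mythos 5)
Your construction is exactly the paper's: the same twisted lexicographic product $\mathbb{Z}\lex G^{I}$ with multiplication $(m,\phi)\cdot(n,\psi)=(m+n,(\phi\circ\lambda^{-n})(\psi\circ\rho^{-m}))$, the same unit $u=(1,e^{I})$, and the same identification of $\Gamma(H,u)$ with the kite, so the proposal is correct and follows essentially the same approach (your verification of the loop axioms, the four clauses, and the associativity criterion is just a more detailed write-up of what the paper calls ``an easy calculation'').
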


\begin{proof}
Let $K_{I}^{\lambda,\rho}(G)_{ea}$ be a kite pseudo effect algebra of a po-group $G.$
We define $H:=W^{\lambda,\rho}_I(G):=\mathbb Z \lex G^I$ and let multiplication $*$ on $W^{\lambda,\rho}_I(G)$ be defined as follows

$$
(m_1,x_i)*(m_2,y_i):=(m_1+m_2, x_{\lambda^{-m_2}(i)}y_{\rho^{-m_1}(i)}).
$$
Then $W^{\lambda,\rho}_I(G)$ is a po-groupoid ordered lexicographically  such that $(0,(e))$ is the neutral element, $(m,x_i)^{-1_r}=(-m,x^{-1}_{(\rho^m\circ \lambda^m)(i)}),$ $(m,x_i)^{-1_l}=(-m,x^{-1}_{(\lambda^m\circ \rho^m)(i)}),$ and the element $u=(1,(e))$ is a strong unit. In fact, $W^{\lambda,\rho}_I(G)$ is a po-loop.

We note that the po-loop $W^{\lambda,\rho}_I(G)$ is associative iff $\lambda \circ \rho = \rho \circ \lambda,$ and in such a case, $W^{\lambda,\rho}_I(G)$ is a po-group.

An easy calculation shows that $(\Gamma(W^{\lambda,\rho}_I(G),u);+,e,u),$ where $e$ is an identity element of the po-loop $W^{\lambda,\rho}_I(G),$ is a pseudo effect algebra such that

\begin{align*}
(1,a_i^{-1})^\sim &= (0,a_{\rho(i)}),\quad
(1,a_i^{-1})^-= (0,a_{\lambda(i)}),\\
(0,f_i)^\sim &= (1,f^{-1}_{\lambda^{-1}(i)}),\quad
(0,f_i)^-= (1,f^{-1}_{\rho^{-1}(i)}).
\end{align*}

In addition, the pseudo effect algebra embedding of $K_{I}^{\lambda,\rho}(G)_{ea}$ onto  $\Gamma(W^{\lambda,\rho}_I(G),u)$ is defined as follows $\langle f_j\colon j \in I\rangle \mapsto (0,f_j)$ and $\langle a^{-1}_i\colon i \in I\rangle \mapsto (1,a^{-1}_i).$
\end{proof}

Now we show an example such that $\lambda\circ \rho \ne \rho \circ \lambda,$  $W^{\lambda,\rho}_I(G)$ is a non-associative po-loop, but $K_{I}^{\lambda,\rho}(G)_{ea}\cong\Gamma(W^{\lambda,\rho}_I(G),u).$

\begin{example}\label{ex:8.2}
Let $I=\{1,2,3,4\},$  $\lambda(1)=1,$ $\lambda(2)=3,$ $\lambda(3)= 2,$ $\lambda(4)=4,$ $\rho(1)= 2,$ $\rho(2)= 3,$ $\rho(3)= 1,$ $\rho(4)= 4.$ Then $\lambda \circ \rho\ne \rho\circ \lambda $ and $W^{\lambda,\rho}_I(G)$ is a non-associative po-groupoid, but $K_{I}^{\lambda,\rho}(G)_{ea}\cong\Gamma(W^{\lambda,\rho}_I(G),u).$
\end{example}

\begin{lemma}\label{le:8.3}
Let $I$ be a non-void set,  $\lambda, \rho\colon I \to I$ be bijections, and let there exist a decomposition $\{I_t\colon t \in T\}$ of $I,$ where each $I_t$ is non-void, such that $\lambda\circ\rho(I_t)=\rho\circ \lambda (I_t)$ for each $t \in T$ and, for each $t \in T$ there are $s_1,s_2 \in T$ such that $\lambda(I_t)=I_{s_1}$ and $\rho(I_t)=I_{s_2}.$ Then, for all integers $m,n \in \mathbb Z,$ we have $\lambda^m\circ \rho^n(I_t)=\rho^n\circ \lambda^m(I_t)$ for each $t \in T.$
\end{lemma}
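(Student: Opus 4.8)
The plan is to prove the statement by induction, leveraging the two hypotheses that control how $\lambda$ and $\rho$ permute the blocks $I_t$. Let me set up notation first. The decomposition $\{I_t : t \in T\}$ is a partition of $I$ into nonempty blocks. The two standing hypotheses are: (a) each of $\lambda$ and $\rho$ maps blocks to blocks, i.e.\ for each $t$ there are $s_1, s_2 \in T$ with $\lambda(I_t) = I_{s_1}$ and $\rho(I_t) = I_{s_2}$; and (b) the \emph{commuting condition} $\lambda\circ\rho(I_t) = \rho\circ\lambda(I_t)$ holds blockwise. The goal is to upgrade (b) to the statement that $\lambda^m \circ \rho^n$ and $\rho^n \circ \lambda^m$ agree \emph{as set-maps on each block} for all integers $m, n$.

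\medskip

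\noindent\textbf{Step 1: $\lambda$ and $\rho$ induce well-defined permutations of the index set $T$.} From hypothesis (a), define $\bar\lambda, \bar\rho : T \to T$ by $\lambda(I_t) = I_{\bar\lambda(t)}$ and $\rho(I_t) = I_{\bar\rho(t)}$. Since $\lambda$ and $\rho$ are bijections of $I$ and the $I_t$ are disjoint and exhaust $I$, these induced maps $\bar\lambda, \bar\rho$ are themselves bijections of $T$; in particular $\lambda^{-1}(I_t) = I_{\bar\lambda^{-1}(t)}$, so (a) automatically passes to inverses. This reduces the blockwise claim to an identity in the group generated by $\bar\lambda$ and $\bar\rho$ inside the symmetric group on $T$: hypothesis (b) says $\bar\lambda\bar\rho = \bar\rho\bar\lambda$ as elements of $\mathrm{Sym}(T)$. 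Once $\bar\lambda$ and $\bar\rho$ commute as permutations of $T$, so do all their powers, giving $\bar\lambda^m \bar\rho^n = \bar\rho^n \bar\lambda^m$ for all $m, n \in \mathbb{Z}$, which is precisely $\lambda^m\circ\rho^n(I_t) = \rho^n\circ\lambda^m(I_t)$.

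\medskip

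\noindent\textbf{Step 2: verifying that (b) really forces $\bar\lambda\bar\rho = \bar\rho\bar\lambda$.} Here one must read the hypothesis correctly: $\lambda\circ\rho(I_t) = \rho\circ\lambda(I_t)$ says that the \emph{images} (which are single blocks $I_{\bar\lambda\bar\rho(t)}$ and $I_{\bar\rho\bar\lambda(t)}$ respectively) coincide as sets, hence $\bar\lambda\bar\rho(t) = \bar\rho\bar\lambda(t)$ for every $t$. So (b) is exactly the commutation of the induced permutations, and the whole argument collapses to the trivial fact that commuting group elements have commuting powers. I would then extend to negative exponents by conjugating: from $\bar\lambda\bar\rho = \bar\rho\bar\lambda$ one gets $\bar\lambda^{-1}\bar\rho = \bar\rho\bar\lambda^{-1}$ and $\bar\lambda\bar\rho^{-1} = \bar\rho^{-1}\bar\lambda$, and iterating yields all integer powers.

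\medskip

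\noindent\textbf{The main obstacle} is not the algebra but the bookkeeping in Step 1: one must be careful that hypothesis (a) genuinely makes $\bar\lambda,\bar\rho$ \emph{well-defined} bijections of $T$ (distinct blocks cannot map into the same block, since $\lambda$ is injective and the blocks partition $I$), and that the equality in (b) is an equality of blocks rather than merely an equality of cardinalities or an inclusion. Provided the images in (b) are interpreted as the actual image sets, each is a single block by (a), so the set equality forces equality of block-indices. After this reduction the conclusion for arbitrary $m, n \in \mathbb{Z}$ is immediate from commutativity in $\mathrm{Sym}(T)$, and no further computation involving $G$ or the kite structure is needed. I would close by remarking that this lemma is the combinatorial engine guaranteeing (via the earlier criterion $\lambda\circ\rho = \rho\circ\lambda \iff W^{\lambda,\rho}_I(G)$ is associative) that associativity can be tested blockwise.
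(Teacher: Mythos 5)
Your proof is correct, and it takes a genuinely different route from the paper's. The paper works directly with set-images in a five-step hands-on induction: first it shows $\rho^{n+1}\circ\lambda(I_t)=\lambda\circ\rho^{n+1}(I_t)$ by induction on $n$ (using that $\rho(I_t)$ is again a block, so the commuting hypothesis can be applied to it), then symmetrically for powers of $\lambda$, then combines these to get both exponents positive, and finally extracts negative exponents by applying the positive case to the sets $\lambda^{-m}\circ\rho^{-n}(I_t)$ and $\lambda^{-m}(I_t)$. Your reduction is cleaner: hypotheses (a) and (b) say precisely that $\lambda$ and $\rho$ induce well-defined permutations $\bar\lambda,\bar\rho$ of the index set $T$ which commute in $\mathrm{Sym}(T)$, after which every desired identity, for positive and negative exponents alike, is the single group-theoretic triviality that commuting elements have commuting powers. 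What your approach buys: a uniform treatment of negative exponents (the paper needs two separate tricks for these, its steps (iv) and (v)), and an explicit justification that $\bar\lambda,\bar\rho$ are bijections of $T$, hence that inverse images of blocks are blocks --- a fact the paper's step (iv) uses silently when it applies step (iii) to the set $\lambda^{-m}\circ\rho^{-n}(I_t)$. What the paper's approach buys: it is entirely self-contained, never introducing the quotient action on $T$, at the cost of length and of leaving that bookkeeping implicit. Your closing remark connecting the lemma to blockwise associativity of $W^{\lambda,\rho}_I(G)$ is consistent with how the paper uses the lemma in Proposition \ref{pr:8.6}.
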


\begin{proof}
(i) First we show that $\rho^{n+1}\circ \lambda(I_t)=\lambda\circ \rho^{n+1}(I_t)$ for each integer $n \ge 1$ and any $t \in T.$ Indeed, $\rho^2\circ \lambda(I_t)= \rho(\rho\circ \lambda(I_t))=\rho(\lambda\circ \rho(I_t))=\rho\circ\lambda(\rho(I_t))=\lambda\circ \rho(\rho(I_t))=\lambda\circ \rho^2(I_t).$

Now by induction we assume that $\rho^{i+1}\circ\lambda(I_s) = \lambda\circ \rho^{i+1}(I_s)$ holds for each $1\le i\le n$ and each $s \in T.$ Then $\rho^{n+1}\circ \lambda(I_t)= \rho^n(\rho\circ \lambda (I_t))=\rho^n(\lambda \circ \rho(I_t))= \rho^n\circ \lambda(\rho(I_t)) = \lambda\circ \rho^n(\rho(I_t))=\lambda\circ \rho^{n+1}(I_t).$

(ii) In the same way we have $\rho\circ \lambda^{m+1}(I_t)=\lambda^{m+1}\circ \rho(I_t)$ for each integer $m\ge 1$ and $t \in T.$

(iii) Using the analogous reasoning, we can show that $\lambda^m\circ \rho^n(I_t)=\rho^n\circ \lambda^m(I_t)$ for all integers $m,n\ge 1$ and each $t \in T.$

(iv) Using (iii), we have $I_t=\rho^{n}\circ \lambda^{m}(\lambda^{-m} \circ \rho^{-n}(I_t))= \lambda^m \circ \rho^n(\lambda^{-m} \circ \rho^{-n}(I_t))$ which yields $\rho^{-n}\circ \lambda^{-m}(I_t)= \lambda^{-m}\circ\rho^{-n}(I_t)$ for each $m,n\ge 0$ and each $t\in T.$

(v) By (iii), we have $\lambda^m\circ \rho^n(\lambda^{-m}(I_t))= \rho^n\circ \lambda^m(\lambda^{-m}(I_t))=\rho(I_t)$ which gives $\rho^n\circ \lambda^{-m}(I_t)=\lambda^{-m}\rho^n(I_t)$ for all $m,n \ge 0$ and each $t \in T.$

In the same way, we can prove $\rho^{-n}\circ \lambda^{m}(I_t)=\lambda^{m}\rho^{-n}(I_t)$ for $m,n \ge 0,$ $t\in T.$

Summarizing (i)-(v), we have proved the statement in question.
\end{proof}

We note that in Example \ref{ex:8.2}, we have $I=\{1,2,3\}\cup \{4\}$ and $\lambda(\{1,2,3\})=\{1,2,3\},$ $\rho(\{1,2,3\})=\{1,2,3\}$, $\lambda(\{4\})=\{4\}=\rho(\{4\})$ and the conditions of Lemma \ref{le:8.3} are satisfied.

\begin{example}\label{ex:8.4}
Let $I=\{1,2,3,4\},$  $\lambda(1)=1,$ $\lambda(2)=3,$ $\lambda(3)= 2,$ $\lambda(4)=4,$ $\rho(1)= 2,$ $\rho(2)= 1,$ $\rho(3)= 4,$ $\rho(4)= 3.$
If we set $I_1=\{1,4\},$ $I_2=\{2,3\},$ then $\lambda(I_1)=I_1,$ $\lambda(I_2)=I_2,$ $\rho(I_1)=I_2$ and $\rho(I_1).$ The conditions of Lemma {\rm \ref{le:8.3}} are satisfied.
\end{example}

\begin{example}\label{ex:8.5}
Let $I=\{1,2,3,4\},$  $\lambda(1)=2,$ $\lambda(2)=3,$ $\lambda(3)= 1,$ $\lambda(4)=4,$ $\rho(1)= 1,$ $\rho(2)= 3,$ $\rho(3)= 4,$ $\rho(4)= 2.$
If we set $I_1=\{1,4\},$ $I_2=\{2,3\},$ then $\lambda\circ\rho(I_1)= I_2=\rho\circ\lambda(I_1)$ and $\lambda\circ\rho(I_2)= I_1=\rho\circ\lambda(I_2).$ But $\lambda(I_1)=\{2,4\},$ $\lambda(I_2)=\{1,3\},$ $\rho(I_1)=\{1,2\}$ and $\rho(I_2)=\{3,4\}.$ Therefore, the decomposition $\{I_1,I_2\}$ does not satisfy the conditions of Lemma {\rm \ref{le:8.3}}, only the decomposition $\{I\}$ does.
\end{example}

\begin{proposition}\label{pr:8.6}
Let $I,\lambda,\rho$ and the decomposition $\{I_t\colon t \in T\}$ of $I$ satisfy the conditions of Lemma {\rm \ref{le:8.3}} and let $G$ be a po-group.  Let $H=\{(m,x_i)\in W^{\lambda,\rho}_I(G)\colon$ for each  $i,j \in I_t,\ x_i=x_j, \ t\in T\}.$ Then   $H$ is a po-group which is a po-subloop of $W^{\lambda,\rho}_I(G)$, and if $u:=(1,(e)) \in H,$ then $(H,u)$ is a unital po-group, and $\Gamma(H,u)$ is a subalgebra of the pseudo effect algebra $\Gamma(W^{\lambda,\rho}_I(G),u).$
\end{proposition}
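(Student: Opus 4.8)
The plan is to verify directly that $H$ is closed under the loop multiplication $*$ and under the two inverses, that $*$ is associative on $H$, and then to read off the remaining assertions. The one structural input I use repeatedly is that the hypotheses of Lemma~\ref{le:8.3} force $\lambda$ and $\rho$ to permute the blocks $\{I_t\colon t\in T\}$; consequently every integer power $\lambda^k$ and $\rho^k$ also maps each block bijectively onto a block.

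First I would settle closure. If $(m_1,x_i),(m_2,y_i)\in H$, then by definition their product in $W^{\lambda,\rho}_I(G)$ is $(m_1+m_2,z_i)$ with $z_i=x_{\lambda^{-m_2}(i)}\,y_{\rho^{-m_1}(i)}$. Fix a block $I_t$ and $i,j\in I_t$. Since $\lambda^{-m_2}$ sends $I_t$ into a single block, $\lambda^{-m_2}(i)$ and $\lambda^{-m_2}(j)$ lie in one block, whence $x_{\lambda^{-m_2}(i)}=x_{\lambda^{-m_2}(j)}$ because $x$ is constant on blocks; likewise $y_{\rho^{-m_1}(i)}=y_{\rho^{-m_1}(j)}$. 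Thus $z_i=z_j$, so the product lies in $H$. The very same argument applied to the formulas $(m,x_i)^{-1_r}=(-m,x^{-1}_{(\rho^m\circ\lambda^m)(i)})$ and $(m,x_i)^{-1_l}=(-m,x^{-1}_{(\lambda^m\circ\rho^m)(i)})$, using that $\rho^m\circ\lambda^m$ and $\lambda^m\circ\rho^m$ permute blocks, shows $H$ is closed under both inverses. The neutral element $(0,(e))$ and the unit $u=(1,(e))$ are constant sequences, so they obviously belong to $H$; in particular the hypothesis $u\in H$ is automatic.

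The crux is associativity on $H$. For $(m_1,x),(m_2,y),(m_3,z)\in H$ a short expansion of the defining formula gives that the $i$-th coordinate of $\bigl((m_1,x)*(m_2,y)\bigr)*(m_3,z)$ equals $x_{\lambda^{-m_2-m_3}(i)}\,y_{\rho^{-m_1}\lambda^{-m_3}(i)}\,z_{\rho^{-m_1-m_2}(i)}$, while the $i$-th coordinate of $(m_1,x)*\bigl((m_2,y)*(m_3,z)\bigr)$ equals $x_{\lambda^{-m_2-m_3}(i)}\,y_{\lambda^{-m_3}\rho^{-m_1}(i)}\,z_{\rho^{-m_1-m_2}(i)}$. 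The $x$- and $z$-factors already coincide, and the two expressions differ only in the index of $y$, namely $\rho^{-m_1}\lambda^{-m_3}(i)$ versus $\lambda^{-m_3}\rho^{-m_1}(i)$. Here Lemma~\ref{le:8.3} enters decisively: taking $m=-m_3$ and $n=-m_1$ it yields $\lambda^{-m_3}\circ\rho^{-m_1}(I_t)=\rho^{-m_1}\circ\lambda^{-m_3}(I_t)$ for the block $I_t\ni i$, so both indices lie in one block, and since $y$ is constant on blocks the two $y$-factors agree. Hence the two coordinates are equal for every $i$, and $*$ is associative on $H$. This is precisely the point where the block-commutativity hypothesis on $\lambda,\rho$ is indispensable, and I expect it to be the only genuine obstacle; everything else is bookkeeping.

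With closure, associativity, a two-sided identity and two-sided inverses established, $H$ is a group, hence a subgroup (in particular a subloop) of $W^{\lambda,\rho}_I(G)$; carrying the order induced from the po-groupoid $W^{\lambda,\rho}_I(G)$, the monotonicity $a\le b\Rightarrow cad\le cbd$ restricts to $H$, so $H$ is a po-group and a po-subloop. Since $u=(1,(e))\in H$ is a strong unit of $W^{\lambda,\rho}_I(G)$ and all its powers $u^n=(n,(e))$ again lie in $H$, the inherited order makes $u$ a strong unit of $H$, so $(H,u)$ is a unital po-group. Finally, $\Gamma(H,u)=\Gamma(W^{\lambda,\rho}_I(G),u)\cap H$ contains $0=(0,(e))$ and $1=u$; if $a,b\in\Gamma(H,u)$ and $a+b$ is defined in $\Gamma(W^{\lambda,\rho}_I(G),u)$, then $a+b=a*b\in H$ with $a+b\le u$, so $a+b\in\Gamma(H,u)$; and the supplements $a^-=u*a^{-1}$ and $a^\sim=a^{-1}*u$, which are the unique solutions of $a^-*a=u$ and $a*a^\sim=u$ by associativity in the group $H$, remain in $H$ because $H$ is closed under $*$ and inverses and contains $u$. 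Therefore $\Gamma(H,u)$ is a subalgebra of the pseudo effect algebra $\Gamma(W^{\lambda,\rho}_I(G),u)$, which completes the argument.
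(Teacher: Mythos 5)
Your proof is correct and follows essentially the same route as the paper: the decisive step in both is that Lemma~\ref{le:8.3} makes the two candidate indices of the middle factor, $\rho^{-m_1}\circ\lambda^{-m_3}(i)$ and $\lambda^{-m_3}\circ\rho^{-m_1}(i)$, land in the same block, so sequences constant on blocks yield associativity of $*$ on $H$. You simply spell out the bookkeeping (closure under $*$ and inverses, the strong unit, and the subalgebra property of $\Gamma(H,u)$) that the paper's very terse proof leaves implicit.
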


\begin{proof}
The conditions of the proposition entail that $H$ is a subloop of the loop $W^{\lambda,\rho}_I(G)$ because the conditions imply by Lemma \ref{le:8.3}
$x_{\lambda^m\circ\rho^n(i)}=x_{\rho^n\circ \lambda^m(i)}$ for all $m,n \in \mathbb Z,$ $i \in I_t,$ $t \in T,$ that is,
associativity of the product holds in $H.$ Consequently, $\Gamma(H,u)$ is a pseudo effect algebra.
\end{proof}

\begin{remark}\label{re:8.7}
{\rm We note that if $\phi$ is the embedding from the proof of Theorem {\rm \ref{th:8.1}} of the kite pseudo effect algebra $K_{I}^{\lambda,\rho}(G)_{ea}$ onto the pseudo effect algebra $\Gamma(W^{\lambda,\rho}_I(G),u),$ then $\phi^{-1}(\Gamma(H,u))$ is a pseudo effect subalgebra of the kite pseudo effect algebra $K_{I}^{\lambda,\rho}(G)_{ea}.$}
\end{remark}

We note, in Example \ref{ex:8.5}, the only decomposition of $I$ satisfying Lemma \ref{le:8.3} is the family $\{I\}.$ Then the corresponding group $H$ from Proposition \ref{pr:8.6} consists only of the elements of the form $(m,(g)),$ where $(m,(g)):=(m,g_i)$ with $g_i = g,$ $i \in I$ $(g \in G),$ therefore, $\Gamma(H,u)\cong \Gamma(\mathbb Z\lex G,(1,0)).$

The same is true concerning the group $H$ from Proposition \ref{pr:8.6} for any case of $I$ and the decomposition $\{I\}.$

If we use the decomposition of $I$ consisting  only of the singletons of $I,$  $\{\{i\}\colon i \in I\},$ then Lemma \ref{le:8.3} holds for this decomposition iff $\lambda \circ \rho=\rho\circ \lambda,$ and for the po-group $H$ from Proposition \ref{pr:8.6}, we have $H=W^{\lambda,\rho}_I(G), $ and the po-loop $W^{\lambda,\rho}_I(G)$ is associative, that is, it is a po-group. Such a case can happen, e.g., if $\rho = \lambda^m$ for some integer $m \in \mathbb Z.$ But this is not a unique case as the following example shows also another possibility.

\begin{example}\label{ex:3.8}
Let $I=\{1,2,3,4\},$ $\lambda(1)=2,$ $\lambda(2)=1,$ $\lambda(3)=4,$ $\lambda(4)=3,$ and $\rho(1)= 4,$ $\rho(2)= 3,$ $\rho(3)= 2,$ $\rho(4)= 1.$ Then $\lambda\circ\rho =\rho\circ \lambda,$ $\lambda^2=id_I=\rho^2,$ but $\rho \ne \lambda^m$ for any $m \in \mathbb Z.$
\end{example}

In addition, if $H$ is an arbitrary subloop of the po-loop
$W^{\lambda,\rho}_I(G)$ such that $u=(1,(e)) \in H,$ then $\Gamma(H,u)$ is a pseudo effect subalgebra of the pseudo effect algebra $\Gamma(W^{\lambda,\rho}_I(G),u),$ and $\phi^{-1}(\Gamma(H,u))$ is also a pseudo effect subalgebra of the kite pseudo effect algebra $K_{I}^{\lambda,\rho}(G)_{ea}.$

From theory of pseudo effect algebras we know, \cite[Thm 7.2]{DvVe2}, that for every pseudo effect algebra $E$ with RDP$_1,$ there exists a unique (up to isomorphism) unital po-group $(H,u)$ with RDP$_1$ such that $E \cong \Gamma(H,u).$ From \cite[Thm 4.1]{DvuK}, it follows that if a po-group $G$ satisfies RDP$_1,$ then so does the kite pseudo effect algebra $K_{I}^{\lambda,\rho}(G)_{ea},$ and consequently, there is a unital po-group $(H,u)$ with RDP$_1$ such that $K_{I}^{\lambda,\rho}(G)_{ea}=\Gamma(H,u).$ Theorem \ref{th:8.1} guarantees the existence of a unital po-loop $(H,u)$ such that $K_{I}^{\lambda,\rho}(G)_{ea}\cong \Gamma(H,u),$ and by Examples \ref{ex:8.4}--\ref{ex:8.5}, we have seen that the po-loop $W^{\lambda,\rho}_I(G)$ is not associative.

\begin{problem}\label{prob:3.9}
{\rm (1)} In \cite{DvuK}, there was formulated an open problem to describe a unital po-group $(H,u)$ such that $K_{I}^{\lambda,\rho}(G)_{ea}\cong \Gamma(H,u).$ This is still not answered.

{\rm (2)} In addition, if there are two unital po-loops $(H_1,u_1)$ and $(H_2,u_2)$ such that $\Gamma(H_1,u_1)\cong K_{I}^{\lambda,\rho}(G)_{ea}\cong \Gamma(H_2,u_2),$ when are these po-loops  isomorphic to each other as po-loops~?
\end{problem}


\begin{thebibliography}{DvVe2}

\bibitem[BuSa]{BuSa}
S. Burris, H.P. Sankappanavar, {\it ``A Course in Universal Algebra"}, The Millennium Edition, 2000.


\bibitem[Dvu1]{Dvu2} A. Dvure\v censkij,    {\it  Pseudo MV-algebras
are intervals in $\ell$-groups}, J. Austral. Math. Soc. {\bf 72}
(2002), 427--445.

\bibitem[Dvu2]{185} A. Dvure\v censkij,    {\it Ideals of pseudo-effect algebras
and their applications}, Tatra Mt. Math. Publ.    {\bf 27} (2003),
45--65.

\bibitem[Dvu3]{DvuK}
A. Dvure\v censkij, {\it Kite pseudo effect algebras,} http://arxiv.org/abs/1306.0304

\bibitem[DvKo]{DvKo}
A. Dvure\v censkij, T. Kowalski, {\it Kites and pseudo BL-algebras}, Algebra Univer. (to appear). http://arxiv.org/submit/510377

\bibitem[DvVe1]{DvVe1} A. Dvure\v censkij, T. Vetterlein,   {\it
Pseudoeffect algebras. I. Basic properties},  Inter. J. Theor.
Phys. {\bf 40} (2001), 685--701.

\bibitem[DvVe2]{DvVe2} A. Dvure\v censkij, T. Vetterlein,   {\it
Pseudoeffect algebras. II. Group representation}, Inter. J. Theor.
Phys. {\bf 40} (2001), 703--726.

\bibitem[DvVe3]{DvVe3} A. Dvure\v censkij, T. Vetterlein,   {\it
Congruences and states on pseudo-effect algebras}, Found. Phys.
Letters {\bf 14} (2001), 425--446.

\bibitem[DvVe4]{DvVe4} A. Dvure\v censkij, T. Vetterlein,    {\it
Non-commutative algebras and quantum structures}, Inter. J. Theor.
Phys.  {\bf 43}  (2004), 1599--1612.

\bibitem[FoBe]{FoBe}  D.J. Foulis, M.K. Bennett,
{\it  Effect algebras and unsharp quantum logics}, Found. Phys. {\bf
24} (1994), 1325--1346.

\bibitem[Fuc]{Fuc}
L. Fuchs, {\it ``Partially Ordered Algebraic Systems"}, Pergamon Press,
Oxford, London, NY, Paris, 1963.

\bibitem[GeIo]{GeIo} G. Georgescu, A. Iorgulescu,
{\it Pseudo-MV algebras},  Multi Valued Logic {\bf 6} (2001),
95--135.

\bibitem[Gla]{Gla}
A.M.W. Glass {\it ``Partially Ordered Groups"}, World Scientific,
Singapore, 1999.

\bibitem[JiMo]{JiMo} P. Jipsen and  F. Montagna, \emph{On the
structure of generalized BL-algebras}, Algebra Universalis {\bf 55}
(2006), 226--237.

\bibitem[Rac]{Rac} J. Rach\r unek, {\it  A
non-commutative generalization of MV-algebras}, Czechoslovak Math.
J. {\bf 52} (2002), 255--273.

\bibitem[Rav]{Rav} K. Ravindran,
{\it  On a structure theory of effect algebras}, PhD thesis, Kansas
State Univ., Manhattan, Kansas, 1996.
\end{thebibliography}
\end{document}